\newcommand{\Fb}{\overline{F}}
\newcommand{\xb}{\overline{x}}
\newcommand{\ds}{\displaystyle}
\newcommand{\nexto}{\kern -0.54em}
\newcommand{\dR}{{\rm {I\ \nexto R}}}
\newcommand{\dZ}{{\cal Z \kern -0.7em Z}}
\newcommand{\dC}{{\rm\hbox{C \kern-0.8em\raise0.2ex\hbox{\vrule
height5.4pt width0.7pt}}}}
\newcommand{\dQ}{{\rm\hbox{Q \kern-0.85em\raise0.25ex\hbox{\vrule
height5.4pt width0.7pt}}}}
\newcommand{\proofbox}{\hspace{\fill}{$\Box$}}
\newtheorem{theorem}{Theorem}
\newtheorem{remark}{Remark}
\newtheorem{example}{Example}
\newenvironment{proof}{Proof.}{\proofbox}
\begin{document}

\author{Authors}

\author{
C. Yal{\c c}{\i}n Kaya\footnote{Mathematics, UniSA STEM, University of South Australia, Mawson Lakes, S.A. 5095, Australia. E-mail: yalcin.kaya@unisa.edu.au\,.}
\and
J. Lyle Noakes\footnote{Department of Mathematics and Statistics, University of Western Australia, Crawley, W.A. 6009, Australia. E-mail: lyle.noakes@uwa.edu.au\,.}
\and
Erchuan Zhang\footnote{Department of Mathematics and Statistics, University of Western Australia, Crawley, W.A. 6009, Australia. E-mail: erchuan.zhang@uwa.edu.au\,.}
}

\title{\vspace{-10mm}\bf Multi-objective Variational Curves}

\maketitle

\begin{abstract} {\noindent\sf }
Riemannian cubics in tension are critical points of the linear combination of two objective functionals, namely the squared norms of the velocity and acceleration of a curve on a Riemannian manifold.  We view this variational problem of finding a curve as a multi-objective optimization problem and construct the Pareto fronts for some given instances where the manifold is  a sphere and where the manifold is a torus.  The Pareto front for the curves on the torus turns out to be particularly interesting: the front is disconnected and it reveals two distinct Riemannian cubics with the same boundary data, which is the first known nontrivial instance of this kind.  We also discuss some convexity conditions involving the Pareto fronts for curves on general Riemannian manifolds.

\end{abstract}

\begin{verse}
{\em Key words}\/: {\sf Riemannian cubics in tension, Multi-objective optimal control, Pareto front, Numerical methods.}
\end{verse}
\begin{verse}
{\bf Mathematics Subject Classification: 58E17; 49M25; 53Z99}
\end{verse}

\pagestyle{myheadings}
\markboth{}{\sf\scriptsize Multi-objective Variational Curves\ \ by C. Y. Kaya, J. L. Noakes \& E. Zhang}

\section{Introduction}

Variational curves are curves which minimize an objective functional such as the length or some other quantity obtained from its velocity or acceleration, and so on.  Very often more than just one of these (competing) functionals are to be minimized simultaneously, giving rise to a multi-objective optimization problem.  A typical example is cubic curves in tension, or simply {\em cubics in tension},  where a critical point of a linear combination of the squared $L^2$-norms of the velocity and the acceleration is found \cite{NoaPop2006,Noakes2005,Zhang2008}.  Although, clearly, two objective functionals are involved here, this problem is not treated as a multi-objective optimization problem in the literature in that it is only posed as a (single-objective) {\em weighted-sum scalarization} of the two objective functionals.

The aim of a multi-objective optimization problem is to find the set of all trade-off solutions.  A trade-off solution, or a {\em Pareto solution}, is a solution which cannot be improved any further to make an objective functional value better  (smaller), without making some of the other objective functional values worse (larger).  Such a solution set is referred to as the {\em Pareto set}, or the {\em efficient set}, and in the objective value space, the {\em Pareto front}, or the {\em efficient front}, respectively; see for example~\cite{Eichfelder2008,Miettinen1999}.

It is of great interest to construct the Pareto front for inspection, so that a desirable solution can be selected by the practitioner.  The most popular approach in constructing a Pareto front is first to scalarize the problem via parameters, or {\em weights}, so as to obtain a continuum of single-objective optimization problems.  These parametric problems are then solved to get a solution in the Pareto front for each parameter value~\cite{DutKay2011,Eichfelder2008,Miettinen1999}.  

To be able to construct the whole Pareto front the parametric scalarization is required to be a surjection from the space of weights to the set of Pareto solutions.  If the problem is convex, i.e., the objective functionals and the constraint set are convex, then the weighted-sum scalarization suffices to furnish a surjection~\cite{BonKay2010}.  If, on the other hand, the problem is nonconvex, the weighted-sum scalarization is not guaranteed to be surjective anymore.

It is well-known that variational problems can often be cast as optimal control problems; see for example~\cite{KayNoa2013}.  In~\cite[Theorem~1]{KayMau2014}, it is stated that the so-called {\em Chebyshev scalarization} (the weighted max-norm of the objective functionals) is surjective, guaranteeing construction of the whole Pareto front. 

In this paper, we consider variational curves with two objective functionals to minimize, motivated by cubics in tension.  We state a similar result to that in~\cite[Theorem~1]{KayMau2014}. We construct the Pareto fronts of certain instances of two Riemannian manifolds, namely a sphere and a torus.  To the best knowledge of the authors, this is the first time multi-objective variational curves on Riemannian manifolds are studied.

We observe that, on the torus, the Pareto front generated %
by using
the Chebychev scalarization is disconnected and Riemannian cubics with the same boundary conditions are not unique.  Non-uniqueness of Riemannian cubics under given nontrivial boundary conditions as presented in this paper has not been encountered in the literature before.

The paper is organized as follows. In Section~\ref{problem}, we introduce the multi-objective optimization problem
motivated by 
cubics in tension. Then we sketch the scalarization techniques, namely the Chebychev and weighted-sum scalarizations.
In Section \ref{sec:convex}, we present some necessary and sufficient conditions for the epigraph of Pareto fronts to be convex on general Riemannian manifolds. Numerical experiments 
in Section \ref{sec:num} 
illustrate the Pareto fronts and Pareto curves
on a sphere and torus endowed with the standard Euclidean metric. 
Finally, we conclude the paper in Section \ref{sec:con} and present some future directions.

\section{Problem Statement and Preliminaries}
\label{problem}

We are motivated by the problem of finding {\em Riemannian cubics in tension}. Let $M$ be a path-connected Riemannian manifold with a Riemannian metric $\langle\cdot,\cdot\rangle$ and a Levi--Civita connection $\nabla$, $TM$ the tangent bundle of $M$. Given two points $x_0,x_1$ and their associated velocities $v_0\in T_{x_0}M$, $v_1\in T_{x_1}M$, we denote by $\mathcal{X}$ the space of all smooth curves $x:[t_0,t_f]\rightarrow M$ satisfying $x(t_0)=x_0$, $\dot{x}(t_0)=v_0$, $x(t_f)=x_1$, $\dot{x}(t_f)=v_1$, where $t_0$ and $t_f$ are fixed. Riemannian cubics in tension are defined to be  critical points of the functional
\begin{equation}  \label{CIT}
\int_{t_0}^{t_f} \bigg(\tau\,\langle\dot{x}(t),\dot{x}(t)\rangle + \langle\nabla_t\dot{x}(t),\nabla_t\dot{x}(t)\rangle\bigg)\,dt\,,
\end{equation}
where $x\in \mathcal{X}$, $\tau>0$ is a constant. Equivalently, Riemannian cubics in tension are solutions of the following Euler--Lagrange equations \cite{NoaPop2006,Noakes2005,Zhang2008}
\begin{align}\label{cubELeq}
\nabla_t^3\dot{x}(t)+R(\nabla_t\dot{x}(t),\dot{x}(t))\dot{x}(t)-\tau\nabla_t\dot{x}(t)={\bf 0}
\end{align}
over $\mathcal{X}$, where $R$ is the Riemannian curvature tensor associated with the Levi-Civita connection $\nabla$, i.e., $R(X,Y)Z=\nabla_X\nabla_YZ-\nabla_Y\nabla_XZ-\nabla_{[X,Y]}Z$ for vector fields $X,Y,Z$ on $M$.

We pose the problem of finding a critical point of the above functional in a more general form as a {\em multi-objective optimization} problem:
\[
\mbox{(MOP)}\qquad\min_{x\in \mathcal{X}}\ \left[\int_{t_0}^{t_f} \langle\dot{x}(t),\dot{x}(t)\rangle\,dt\,,\ \ \int_{t_0}^{t_f} \langle\nabla_t\dot{x}(t),\nabla_t\dot{x}(t)\rangle\,dt\right]\,.
\]
Let
\[
F_1(x) := \int_{t_0}^{t_f} \langle\dot{x}(t),\dot{x}(t)\rangle\,dt \qquad\mbox{and}\qquad
F_2(x) := \int_{t_0}^{t_f} \langle\nabla_t\dot{x}(t),\nabla_t\dot{x}(t)\rangle\,dt\,.
\]
Define the vector of objective functionals, $F(x) := (F_1(x),F_2(x))$.  The point $x^*\in \cal{X}$ is said to be a {\em Pareto minimum} if there
exists no $x\in \cal{X}$ such that $F(x) \not= F(x^*)$ and
\[
F_i(x) \le F_i(x^*)\quad\mbox{for } i=1,2\,.
\]
On the other hand, $x^*\in \cal{X}$ is said to be a {\em weak Pareto minimum} if there exists no $x\in \cal{X}$ such that
\[
F_i(x) < F_i(x^*)\quad\mbox{for } i=1,2\,.
\]
The set of all vectors of objective functional values at the weak Pareto minima is said to be the {\em Pareto front} (or {\em efficient front}) of Problem~(MOP) in the 2-dimensional {\em objective value space}.  Note that the coordinates of a point in the Pareto front are simply $F_i(x^*)$, $i=1,2$.  The Pareto front is usually a curve (connected or disconnected).

\subsection{Scalarization}
\label{scalarization}

For computing a solution of the nonconvex multi-objective problem (MOP), we will consider the following single-objective problem, referred to as {\em scalarization}.  While this scalarization is often considered for nonconvex multi-objective finite-dimensional optimization problems~\cite{Miettinen1999}, it has also been considered for infinite-dimensional (optimal control) problems in \cite{KayMau2014,KayMau2023}.
\[
\mbox{(MOP$_w$)}\qquad \min_{x\in \cal{X}}\ \max\{w_1\,F_1(x),\, w_2\,F_2(x)\}\,,
\]
where $w_1$ and $w_2$ are referred to as {\em weights}.  Problem~(MOP$_w$) is referred to as the {\em weighted Chebychev problem} (or {\em Chebychev scalarization})\footnote{The more commonly used weighted-sum scalarization will be considered below.}. It is required that $w_1,w_2\geq 0$.  For brevity in notation, we set $w_1 = w$ and $w_2 = 1 - w$ with $w\in[0,1]$. 

The following theorem is a direct consequence of \cite[Corollary
5.35]{Jahn2011}. It can also be concluded by generalizing the proofs
given in a finite-dimensional setting in \cite[Theorems 3.4.2 and
3.4.5]{Miettinen1999} to the infinite-dimensional setting in a
straightforward manner.

\begin{theorem} \label{scalar_theorem}
The point $x^*$ is a weak Pareto minimum of {\rm (MOP)} if, and only if, $x^*$ is a solution of {\rm (MOP$_w$)} for some $w_1,w_2>0$, $w_1 + w_2 = 1$.
\end{theorem}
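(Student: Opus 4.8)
The plan is to transfer the elementary, order-theoretic argument behind \cite[Corollary~5.35]{Jahn2011} --- and behind \cite[Theorems~3.4.2 and~3.4.5]{Miettinen1999} --- to the curve space $\mathcal{X}$, after noting that the only structural fact it requires is already in hand: since $F_1$ and $F_2$ are integrals of squared norms, $F_1(x)\geq 0$ and $F_2(x)\geq 0$ for every $x\in\mathcal{X}$, so the Chebychev scalarization in (MOP$_w$) is effectively taken with respect to a reference point at the origin of the objective value space. No continuity, compactness, convexity, or existence-of-minimizers property of $\mathcal{X}$ enters the argument: in the ``only if'' direction the solution of (MOP$_w$) that we produce is $x^*$ itself. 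This is precisely why moving from a finite-dimensional feasible set to the infinite-dimensional $\mathcal{X}$ introduces no new difficulty.

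\emph{Sufficiency.} Suppose $x^*$ solves (MOP$_w$) for some $w_1,w_2>0$ with $w_1+w_2=1$, and assume for contradiction that $x^*$ is not a weak Pareto minimum; then there is $x\in\mathcal{X}$ with $F_i(x)<F_i(x^*)$ for $i=1,2$. Since $w_i>0$, multiplying gives $w_iF_i(x)<w_iF_i(x^*)$ for each $i$, hence $\max\{w_1F_1(x),w_2F_2(x)\}<\max\{w_1F_1(x^*),w_2F_2(x^*)\}$, contradicting the optimality of $x^*$ for (MOP$_w$).

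\emph{Necessity.} Let $x^*$ be a weak Pareto minimum, and treat first the generic case $F_1(x^*)>0$, $F_2(x^*)>0$. Put
\[
c:=\left(\frac{1}{F_1(x^*)}+\frac{1}{F_2(x^*)}\right)^{-1},\qquad w_1:=\frac{c}{F_1(x^*)},\qquad w_2:=\frac{c}{F_2(x^*)},
\]
so that $w_1,w_2>0$, $w_1+w_2=1$, and $w_1F_1(x^*)=w_2F_2(x^*)=c$; thus the Chebychev objective at $x^*$ equals $c$. Were $x^*$ not a solution of (MOP$_w$), there would exist $x\in\mathcal{X}$ with $\max\{w_1F_1(x),w_2F_2(x)\}<c$, hence $w_iF_i(x)<c=w_iF_i(x^*)$ and therefore $F_i(x)<F_i(x^*)$ for both $i$ --- contradicting the weak Pareto optimality of $x^*$. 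Hence $x^*$ solves (MOP$_w$) for these weights.

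\emph{Main obstacle.} The one place needing care is the degenerate case $F_i(x^*)=0$ for some $i$, where the balancing weights above are undefined. Now $F_1(x^*)=0$ forces $\dot{x}^*\equiv{\bf 0}$, i.e.\ the constant curve, possible only for the trivial boundary data $x_0=x_1$, $v_0=v_1={\bf 0}$, in which case both implications are immediate; and $F_2(x^*)=0$ means $x^*$ is a geodesic lying in $\mathcal{X}$, which is automatically a weak Pareto minimum because $F_2\geq 0$ precludes improving the second objective. To recover strictly positive weights here one replaces the origin by a strict utopian reference point $z^{\ast\ast}$ with $z^{\ast\ast}_i<\inf_{x\in\mathcal{X}}F_i(x)$ for $i=1,2$, runs the same comparison with the strictly positive quantities $F_i(x)-z^{\ast\ast}_i$ in place of $F_i(x)$, and checks that the solution set of the shifted scalarization coincides with that of (MOP$_w$); alternatively, one restricts attention to boundary data admitting neither the trivial curve nor a connecting geodesic, for which $F_1(x^*),F_2(x^*)>0$ always and the computation above applies verbatim. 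Verifying this last point is the only genuinely fiddly step; everything else is the two-line comparison of maxima carried out above.
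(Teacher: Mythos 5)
Your sufficiency argument and your necessity argument in the generic case $F_1(x^*),F_2(x^*)>0$ are correct and are exactly the standard order-theoretic comparison underlying the references the paper cites (the paper itself gives no proof, only the citation to Jahn and Miettinen), so up to that point you have actually supplied more detail than the paper does.

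The genuine gap is in your treatment of the degenerate case, and it is not merely ``fiddly'': the repair you sketch via a strict utopian reference point rests on the claim that the solution set of the shifted scalarization $\min_x\max_i w_i\,(F_i(x)-z^{**}_i)$ coincides with that of (MOP$_w$), and that claim is false --- shifting the reference point is precisely what changes which weak Pareto points are reachable with strictly positive weights. Concretely, take boundary data on $\mathbb{S}^2$ for which the unique geodesic $\gamma\in\mathcal{X}$ determined by $(x_0,v_0)$ also meets $(x_1,v_1)$ at $t_f$ and is a great-circle arc longer than $\pi$ (so it contains a conjugate point). Then $F_2(\gamma)=0$, so $\gamma$ is a weak Pareto minimum (nothing can make $F_2$ strictly smaller), yet the negativity of the index form yields admissible variations $x_\e\in\mathcal{X}$ with $F_1(x_\e)=F_1(\gamma)-c\,\e^2+o(\e^2)$, $c>0$, and $F_2(x_\e)=O(\e^2)$; for every fixed $w_1,w_2>0$ and $\e$ small enough this gives $\max\{w_1F_1(x_\e),w_2F_2(x_\e)\}<w_1F_1(\gamma)=\max\{w_1F_1(\gamma),w_2F_2(\gamma)\}$, so $\gamma$ solves (MOP$_w$) for \emph{no} positive weights. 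The shifted problem with $z^{**}_2=-\e_2<0$ \emph{does} recover $\gamma$ (a competitor would need $F_2(x)+\e_2<\e_2$, i.e.\ $F_2(x)<0$), which shows the two solution sets genuinely differ. The upshot is that the ``only if'' direction, as stated with the reference point at the origin, is false in this degenerate case; a correct statement either builds the strict utopian shift $\beta^*_i<\inf_xF_i(x)$ into (MOP$_w$) (as the paper's later reformulation and the cited Corollary 5.35 of Jahn effectively do), or excludes boundary data admitting a connecting geodesic or a constant curve --- your second fallback. You should adopt one of these explicitly rather than assert the equivalence of the shifted and unshifted scalarizations.
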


An {\em ideal cost} $F^*_i$, $i=1,2$, associated with Problem~(MOP$_w$) is the optimal value of the optimal control problem,
\[
\mbox{(P$_i$)}\qquad \inf_{x\in \cal{X}}\ F_i(x)\,.
\]

\begin{remark} \rm
Since $F_i(x)$ for $x\in\mathcal{X}$ is bounded below, the problem $(P_i)$ is well-defined. Note that for the optimization problem $\min_{x\in \mathcal{X}}F_1(x)$ it is customary to specify two endpoints, but problems ($P_1$) and ($P_2$) share the same set of inputs. For the optimization problem  $\min_{x\in \mathcal{X}}F_2(x)$, we usually specify two endpoints and their associated end-velocities which usually determines a Riemannian cubic curve. 
The difficulty is that $F_1$ usually does not have a minimizer when the curves are restricted in this way, which is why we ask for an infimum instead of a minimizer. 
\proofbox
\end{remark}

Let $\xb$ be a minimizer of the single-objective problem (P$_i$) above.  Then we also define $\Fb_j := F_j(\xb)$, for $j\neq i$ and $j=1,2$.

In general, it is useful to add a positive number to each
objective in order to obtain an even spread of the Pareto points
approximating the Pareto front---see, for example, \cite{DutKay2011},
for further discussion and geometric illustration.  For this purpose,
we define next the so-called {\em utopian objective values}.

A {\em utopian objective vector} $\beta^*$ associated with
Problem~(MOP$_w$) consists of the components $\beta^*_i$ given as
$\beta^*_i = F_i^* - \varepsilon_i$ where $\varepsilon_i > 0$ for $i = 1, 2$.  Problem~(MOP$_w$) can be equivalently
written as
\[
\min_{x\in \cal{X}}\
\max\{w\,(F_1(x) - \beta_1^*),\, (1-w)\,(F_2(x) - \beta_2^*)\}\,.
\]

We note that although the objective functionals in Problem~(MOP) are differentiable in their arguments, Problem~(MOP$_w$), or its equivalent above, is still a non-smooth optimization problem, because of the $\max$ operator in the objective. However, it is well-known that Problem~(MOP$_w$) can be re-formulated in the following (smooth) form (cf. the formulation in~\cite{KayNoa2013}) by introducing a new variable $\alpha\ge 0$.
\[
\mbox{(SP$_w$) }\left\{\begin{array}{rl}
\ds\min_{{\alpha\ge0}\atop{x\in \cal{X}}} & \ \ \alpha \\[4mm]
\mbox{subject to} & \ \ w\,(F_1(x) - \beta_1^*)\le\alpha\,, \\[2mm]
& \ \ (1-w)\,(F_2(x) - \beta_2^*) \le\alpha\,.
\end{array} \right.
\]
Problem (SP$_w$) is referred to as {\em goal attainment method}
\cite{Miettinen1999}, as well as {\em Pascoletti--Serafini scalarization} \cite{Eichfelder2008}.  We will solve Problem~(SP$_w$) for the two examples we want to study.

Note that the popular {\em weighted-sum} scalarization, given below, fails to generate the nonconvex parts of a Pareto front.
\[
\mbox{(MOPws)}\qquad \min_{x\in \cal{X}}\ (w\,F_1(x) + (1-w)\,F_2(x))\,.
\]
We will illustrate this with the torus problem in Section~\ref{sec:torus}.  We note that the above weighted-sum scalarization is equivalent to the minimization of \eqref{CIT} (or the Riemannian cubics in tension problem) with $\tau = w/(1-w)$, $w\in[0,1)$.

\subsection{Essential interval of weights}
\label{sec:ess_weights}

With the Chebyshev scalarization, it would usually be enough for the weight $w$ to take values over a (smaller) subinterval $[w_0,w_f]\subset[0,1]$, with $w_0 > 0$ and $w_f<1$, for the generation of the whole front.  Figure~\ref{fig:weights1} illustrates the geometry to compute the subinterval end-points, $w_0$ and $w_f$.  In the illustration, the points $(F_1^*,\Fb_2)$ and $(\Fb_1,F_2^*)$ represent the boundary of the Pareto front.  The equations of the rays which emanate from the utopia point $(\beta_1^*,\beta_2^*)$ and pass through the boundary points are also shown.  By substituting the boundary values of the Pareto curve into the respective equations, and solving each equation for $w_0$ and $w_f$ one simply gets
\begin{equation} \label{w0wf}
w_0 = \frac{(F_2^* - \beta_2^*)}{(\Fb_1 - \beta_1^*) +
  (F_2^* - \beta_2^*)}
\qquad\mbox{and}\qquad
w_f = \frac{(\Fb_2 - \beta_2^*)}{(F_1^* - \beta_1^*) +
  (\Fb_2 - \beta_2^*)}\,.
\end{equation}

\begin{figure}
\begin{center}
\psfrag{pf}{Pareto front}
\psfrag{phi1}{$F_1$}
\psfrag{phi2}{$F_2$}
\psfrag{p1}{$(F_1^*,\Fb_2)$}
\psfrag{p2}{$(\Fb_1,F_2^*)$}
\psfrag{wf}{$w_f\,(F_1 - \beta_1^*) = (1-w_f)\,(F_2 - \beta_2^*)$}
\psfrag{w0}{$w_0\,(F_1 - \beta_1^*) = (1-w_0)\,(F_2 - \beta_2^*)$}
\psfrag{utop}{$(\beta_1^*,\beta_2^*)$}
\includegraphics[width=80mm]{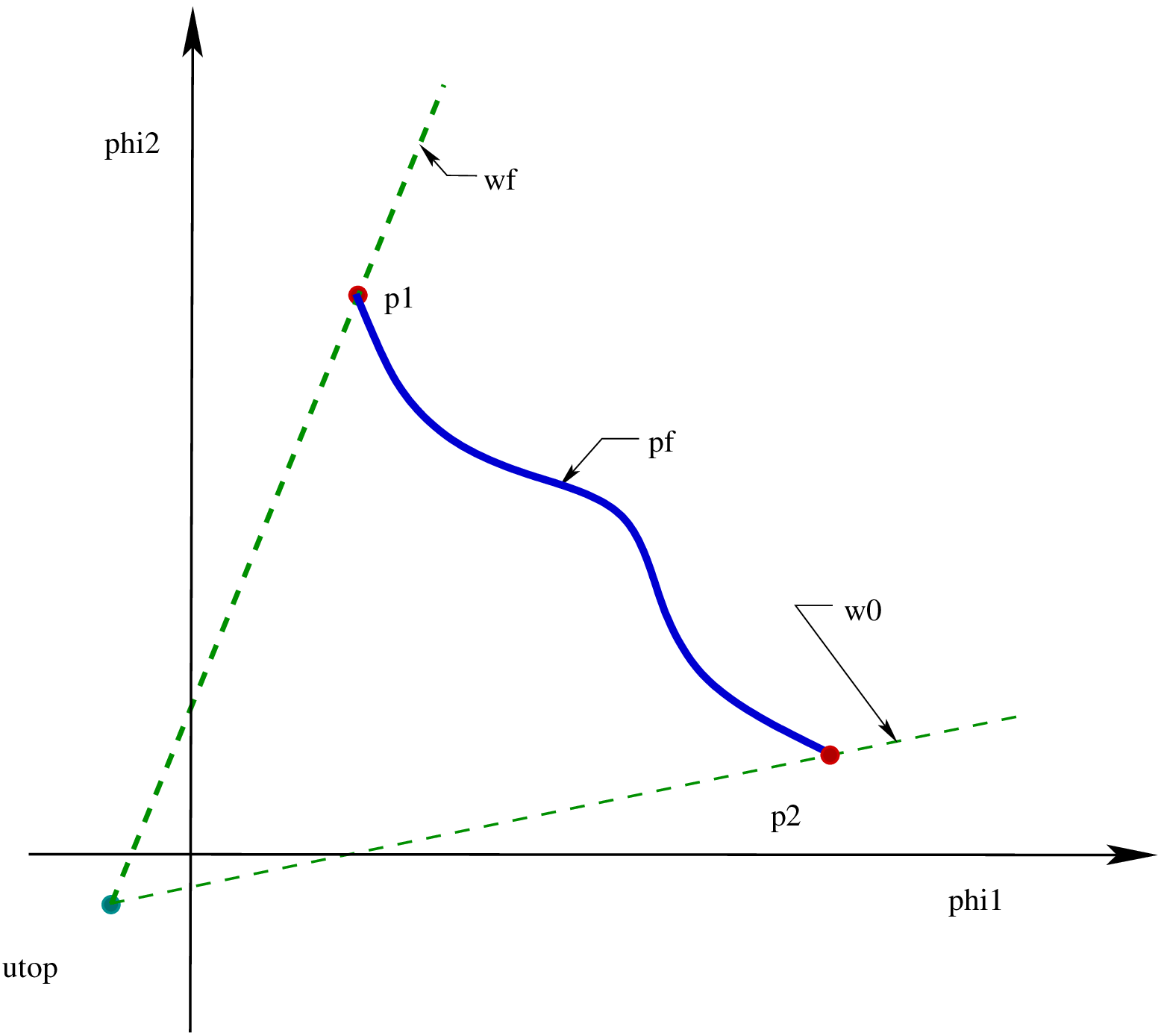}
\caption{\sf Determination of the essential subinterval of weights $[w_0,w_f]$~\cite{KayMau2014}.}
\label{fig:weights1}
\end{center}
\end{figure}

From the geometry depicted in Figure~\ref{fig:weights1}, as also discussed in~\cite{KayMau2014}, one can deduce that with every $w\in[0,w_0]$ the solution of (MOP$_w$) will yield the same boundary point $(\Fb_1,F_2^*)$ on the Pareto front.  Likewise with every $w\in[w_f,1]$ the same boundary point $(F_1^*,\Fb_2)$ is generated.  This observation justifies the avoidance of the weights $w\in[0,w_0)\cup(w_f,1]$ in order not to keep getting the boundary points of the Pareto front, as otherwise one would end up wasting valuable computational effort and time.

\section{Convexity Conditions of Pareto Fronts}\label{sec:convex}

The main purpose of this section is to present some necessary and sufficient conditions for Pareto Fronts resulting from the problem (MOP) to be convex. Throughout this paper, a Pareto Front is called convex when its epigraph is convex in the standard sense. In detail, if the Pareto Front $PF$ can be written as $F_2=PF(F_1)$, then $PF$ is convex if and only if $PF(sF_1+(1-s)\tilde{F}_1)\leq sPF(F_1)+(1-s)PF(\tilde{F}_1)$ for any $F_1,\tilde{F}_1$ on the front and $s\in[0,1]$ such that $sF_1+(1-s)\tilde{F}_1$ is also on the front.

Suppose the optimal curve of the problem (MOP) with respect to some scalarization (such as weighted-sum, Chebychev) is $x=x(t,w)$, where $0<w<1$ is the weight parameter. Denote $F_1(w):=\int_{t_0}^{t_f}\langle \dot{x}(t,w),\dot{x}(t,w)\rangle dt$, $F_2(w):=\int_{t_0}^{t_f}\langle \nabla_t \dot{x}(t,w),\nabla_t \dot{x}(t,w)\rangle dt$ and their derivatives with respect to $w$ by $F_1^\prime$ and $F_2^\prime$.

\begin{theorem}
Suppose the Pareto front $PF$ of the problem (MOP) with respect to some weight $w$ can be denoted by $F_2(w)=PF(F_1(w))$ and $F_1,F_2,PF$ are all twice-differentiable. Then, $PF$ is convex if and only if
\begin{align}\label{convex_gen}
F_1^\prime(F_1^\prime F_2^{\prime\prime}-F_1^{\prime\prime}F_2^{\prime})\geq 0.
\end{align}
\end{theorem}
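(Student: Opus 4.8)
The plan is to reduce the statement to the elementary second‑order characterization of convexity for a function of one real variable, and then to express that condition in terms of the $w$‑derivatives of $F_1$ and $F_2$ via the chain rule. Since $PF$ is assumed twice differentiable, the definition of a convex Pareto front recalled just before the theorem — namely $PF(sF_1+(1-s)\tilde F_1)\le sPF(F_1)+(1-s)PF(\tilde F_1)$ — is equivalent to $PF''(F_1)\ge 0$ at every point $F_1$ of the front. So the whole theorem amounts to translating the inequality $PF''\ge 0$ into an inequality in $w$.

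Next I would carry out that translation. Differentiating the identity $F_2(w)=PF(F_1(w))$ once with respect to $w$ gives $F_2'=PF'(F_1)\,F_1'$, hence $PF'(F_1)=F_2'/F_1'$ wherever $F_1'\neq0$. Differentiating a second time gives $F_2''=PF''(F_1)\,(F_1')^{2}+PF'(F_1)\,F_1''$; substituting $PF'(F_1)=F_2'/F_1'$ and solving for $PF''$ yields
\[
PF''(F_1)=\frac{F_1'F_2''-F_1''F_2'}{(F_1')^{3}}\,.
\]
Multiplying both sides by $(F_1')^{4}>0$ — which is legitimate at any $w$ with $F_1'\neq0$ — shows that $PF''(F_1)\ge0$ holds if and only if $F_1'\,(F_1'F_2''-F_1''F_2')\ge0$, which is exactly \eqref{convex_gen}. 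Applying this equivalence at every admissible $w$ gives both directions of the theorem simultaneously.

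The only point needing care — and the main (minor) obstacle — is the role of the hypothesis that $PF$ is a well‑defined twice‑differentiable function of $F_1$. For the reparametrization $w\mapsto F_1(w)$ to render $PF$ single‑valued and differentiable one needs $F_1$ to be strictly monotone in $w$ on the interval under consideration, i.e. $F_1'\neq0$ there, and under this hypothesis the computation above applies verbatim. At an isolated parameter value where $F_1'$ vanishes one recovers the conclusion by continuity of $F_1',F_1'',F_2',F_2''$, since the set where $F_1'(F_1'F_2''-F_1''F_2')\ge0$ is closed; alternatively such a value corresponds to a vertical tangent of the front and can be handled by interchanging the roles of $F_1$ and $F_2$. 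Beyond this bookkeeping there is no genuine difficulty, so the proof is essentially the two‑line chain‑rule computation displayed above.
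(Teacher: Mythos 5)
Your proposal is correct and follows essentially the same route as the paper: differentiate $F_2(w)=PF(F_1(w))$ twice, substitute $PF'=F_2'/F_1'$, and observe that $PF''\ge 0$ is equivalent to \eqref{convex_gen} after clearing the positive factor $(F_1')^4$. Your additional discussion of the degenerate case $F_1'=0$ is a point the paper's proof passes over silently, but it does not change the substance of the argument.
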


\begin{proof}
    This proof can be seen from the fact that $PF$ is convex if and only if $PF^{\prime\prime}\geq 0$. By straightforward calculations, we have
    \begin{align*}
        F_2^\prime&=PF^\prime F_1^\prime,\\
        F_2^{\prime\prime}&=PF^{\prime\prime} (F_1^\prime)^2+PF^\prime F_1^
{\prime\prime}.
    \end{align*}
    Then substituting the first identity into the second one, we get the relation \eqref{convex_gen}.
\end{proof}

If $F_1^\prime\neq 0$, then we find
\begin{align*}
    PF^{\prime\prime}= \frac{1}{(F_1^{\prime})^2}\left(F_2^{\prime\prime}-\frac{F_2^\prime}{F_1^\prime}F_1^{\prime\prime}\right).
\end{align*}

Now we consider the convexity conditions of Pareto Fronts generated by the problem (MOP) with respect to the weighted-sum scalarization.

\begin{theorem}\label{convex_ws_thm}
    The Pareto front PF of the problem (MOP) generated with respect to the weighted-sum scalarization ($MOP_{ws}$) is convex if and only if
    \begin{align}\label{convex_ws}
        \int_{t_0}^{t_f}\langle \nabla_t\dot{x}(t,w),x^\prime(t,w)\rangle dt\geq 0,
    \end{align}
    where $x$ satisfies the Euler--Lagrange equation \eqref{cubELeq} with $\tau=\frac{w}{1-w}$ $(w\neq 1)$ and $x^\prime$ is the derivative with respect to the weight $w$.
\end{theorem}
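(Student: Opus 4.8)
The plan is to combine the preceding theorem (the criterion \eqref{convex_gen}) with the first-order optimality of the weighted-sum minimizer, and then to evaluate $F_1'(w)$ explicitly. Write $T(t,w):=\dot x(t,w)$ and $W(t,w):=x'(t,w)=\partial x/\partial w$, and recall the torsion-free identity $\nabla_wT=\nabla_tW$ for the two-parameter family $x(\cdot,\cdot)$. Since $w\,F_1+(1-w)\,F_2=(1-w)\big(\tfrac{w}{1-w}F_1+F_2\big)$ is, up to the positive factor $(1-w)$, the tension functional \eqref{CIT} with $\tau=w/(1-w)$, the minimizer $x(\cdot,w)$ solves \eqref{cubELeq}. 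Moreover the prescribed boundary data $x(t_0),\dot x(t_0),x(t_f),\dot x(t_f)$ are independent of $w$, so differentiating in $w$ gives $W=0$ at $t=t_0,t_f$ and, using $\nabla_wT=\nabla_tW$, also $\nabla_tW=0$ at $t=t_0,t_f$; hence $W$ is an admissible variation of $x(\cdot,w)$ inside $\mathcal X$.

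First I would compute $F_1'(w)$ by differentiating under the integral sign and integrating by parts once, the boundary term dropping out because $W=0$ at the endpoints:
\[
F_1'(w)=2\int_{t_0}^{t_f}\langle\nabla_tW,T\rangle\,dt=-2\int_{t_0}^{t_f}\langle\nabla_t\dot x(t,w),x'(t,w)\rangle\,dt .
\]
Next, since $x(\cdot,w)$ minimizes $w\,F_1+(1-w)\,F_2$ over $\mathcal X$ and $W$ is admissible, the vanishing of the first variation in the direction $W$ gives $w\,F_1'(w)+(1-w)\,F_2'(w)=0$, i.e. $F_2'=-\tfrac{w}{1-w}F_1'$. (The same relation also follows by integrating $F_2'$ by parts twice and substituting \eqref{cubELeq}, which turns $\nabla_t^3\dot x+R(\nabla_t\dot x,\dot x)\dot x$ into $\tfrac{w}{1-w}\nabla_t\dot x$.) Differentiating $F_2'=-\tfrac{w}{1-w}F_1'$ once more in $w$ and substituting the resulting $F_2',F_2''$ into the left-hand side of \eqref{convex_gen}, a short cancellation yields
\[
F_1'\big(F_1'F_2''-F_1''F_2'\big)=-\frac{\big(F_1'(w)\big)^{3}}{(1-w)^{2}} .
\]
By the preceding theorem $PF$ is convex if and only if this quantity is nonnegative, that is, if and only if $F_1'(w)\le 0$ (the degenerate case $F_1'(w)=0$ being covered automatically); by the formula for $F_1'(w)$ above, this is precisely the inequality \eqref{convex_ws}.

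The individual steps are all short computations once the framework is set up; the one place needing care is the claim that $x'=\partial x/\partial w$ is an admissible variation — in particular that $\nabla_tx'$, and not merely $x'$, vanishes at $t_0$ and $t_f$ — since this is what makes every boundary term in the integrations by parts disappear. Beyond that, the argument tacitly uses the regularity needed to differentiate $w\mapsto x(\cdot,w)$ and to differentiate under the integral sign, and the representability $F_2=PF(F_1)$ with $PF$ twice differentiable, both of which are already part of the hypotheses inherited from the preceding theorem.
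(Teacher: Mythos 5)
Your proposal is correct and follows essentially the same route as the paper: compute $F_1'=-2\int\langle\nabla_t\dot x,x'\rangle\,dt$, establish $F_2'=-\tfrac{w}{1-w}F_1'$, substitute into the general criterion \eqref{convex_gen} to get $-(F_1')^3/(1-w)^2$, and read off the sign condition. The only cosmetic difference is that you obtain $F_2'=-\tau F_1'$ from the vanishing first variation of the weighted-sum objective (an envelope-type argument), whereas the paper derives it by integrating $F_2'$ by parts twice and inserting the Euler--Lagrange equation \eqref{cubELeq} --- a route you also note; your explicit attention to the vanishing of $x'$ and $\nabla_t x'$ at the endpoints is a welcome extra care the paper leaves implicit.
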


\begin{proof}
    By integration by parts, we have
    \begin{align*}
        F_1^\prime &= 2\int_{t_0}^{t_f}\langle \nabla_{x^\prime}\dot{x},\dot{x}\rangle dt=-2\int_{t_0}^{t_f}\langle\nabla_{t}\dot{x},x^\prime\rangle dt,\\
        F_2^\prime &= 2\int_{t_0}^{t_f}\langle \nabla_{x^\prime}\nabla_t \dot{x},\nabla_t\dot{x}\rangle dt=2\int_{t_0}^{t_f}\langle \nabla_t^2 x^\prime+R(x^\prime,\dot{x})\dot{x},\nabla_t\dot{x}\rangle dt\\
        &= 2\int_{t_0}^{t_f}\langle \nabla_t^3\dot{x}+R(\nabla_t\dot{x},\dot{x})\dot{x},x^\prime\rangle dt = 2\tau\int_{t_0}^{t_f}\langle \nabla_t\dot{x},x^\prime\rangle dt=-\tau F_1^\prime,
    \end{align*}
    where we have used the Euler--Lagrange equation \eqref{cubELeq} in the second last identity above. Then, condition \eqref{convex_gen} implies
    \begin{align*}
        F_1^\prime(F_1^\prime F_2^{\prime\prime}-F_1^{\prime\prime} F_2^\prime)=-\frac{(F_1^\prime)^3}{(1-w)^2}\geq 0\implies F_1^\prime \leq 0\implies \int_{t_0}^{t_f}\langle \nabla_t\dot{x}(t,w),x^\prime(t,w)\rangle dt\geq 0,
    \end{align*}
    which completes this proof.
\end{proof}

Frankly speaking, the condition \eqref{convex_ws} is not easy to verify in practice. Partly because it is difficult to solve the the Euler--Lagrange equation \eqref{cubELeq} to get $x(t,w)$ and partly because evaluating the derivative of $x(t,w)$ with respect to $w$ and its associated integration brings a lot of difficulties. Now we apply Theorem \ref{convex_ws_thm} to the case where the Riemannian manifold $M$ is the standard Euclidean space, i.e., the Euler--Lagrange equation \eqref{cubELeq} can be solved explicitly.

\begin{remark}\label{cor_convex_eculidean}
    When $M$ is specified as the Euclidean space $\mathbb{E}^n$, the Pareto front PF of the problem (MOP) generated with respect to the weighted-sum scalarization ($MOP_{ws}$) is convex for the weight $w$ on some interval $(w_0,1)$, where $0<w_0<1$. Refer to Appendix \ref{cor_convex_pf} for more detailed discussions.
\end{remark}

The following example gives a visualisation of the Pareto front of the multi-objective optimisation problem in the Euclidean space and verifies Theorem \ref{convex_ws_thm}.

\begin{example}
    Choosing $T=1$, $x_0=(0,0,0)^T$, $v_0=(1,0,0)^T$, $x_T=(2,2,1)^T$, $v_T=(0,1,-1)^T$, we can get the analytical form of cubics in tension in $\mathbb{E}^3$ (see expressions in Appendix \ref{cor_convex_pf}). Figure \ref{fig:Euclidean_PF} shows the Pareto front of the problem (MOP) with respect to the scalarization (MOP$_{ws}$) and plots the convexity condition \eqref{convex_ws}.
    
    \begin{figure}[ht]
    \centering
    \includegraphics[width=75mm]{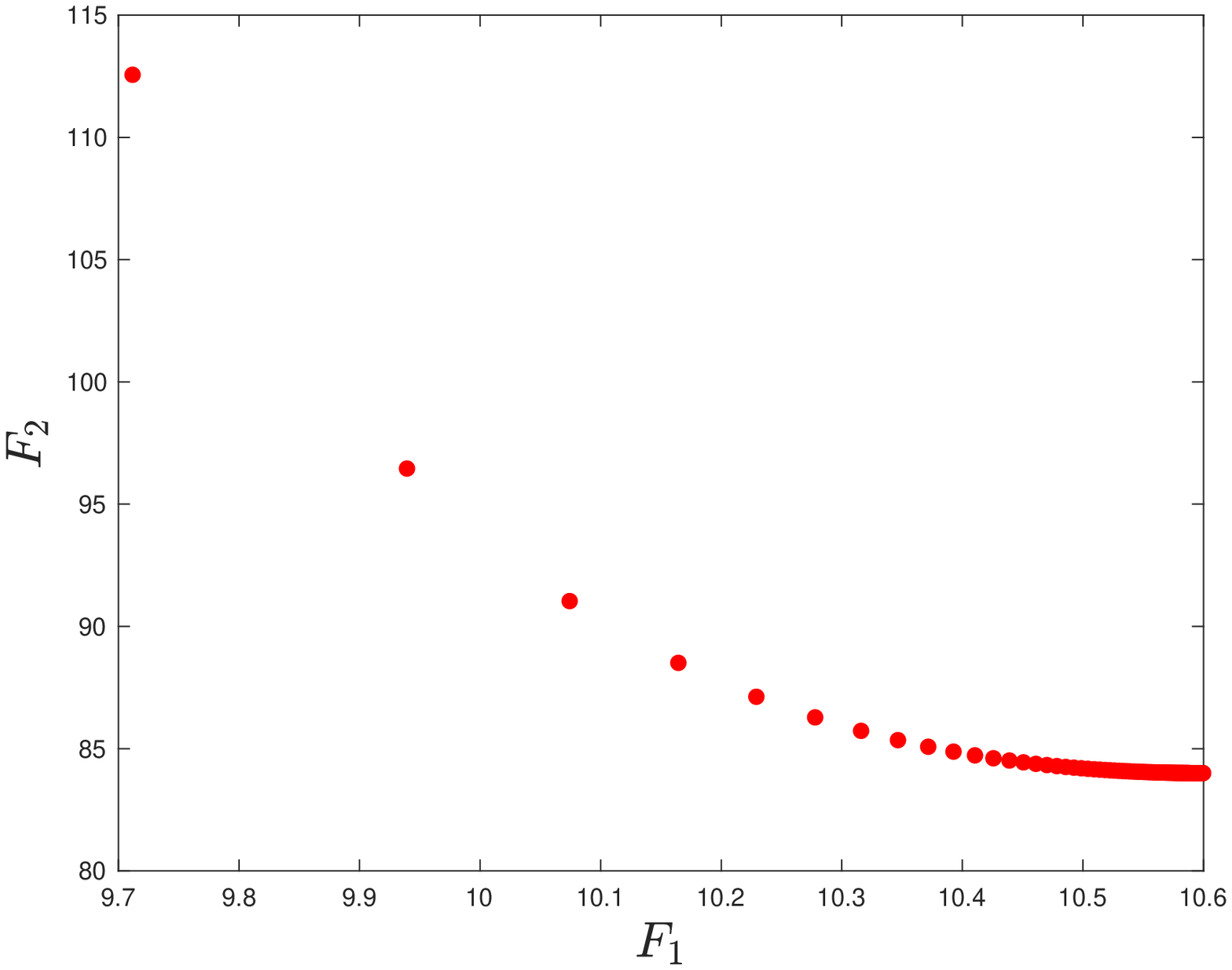}
    \includegraphics[width=75mm]{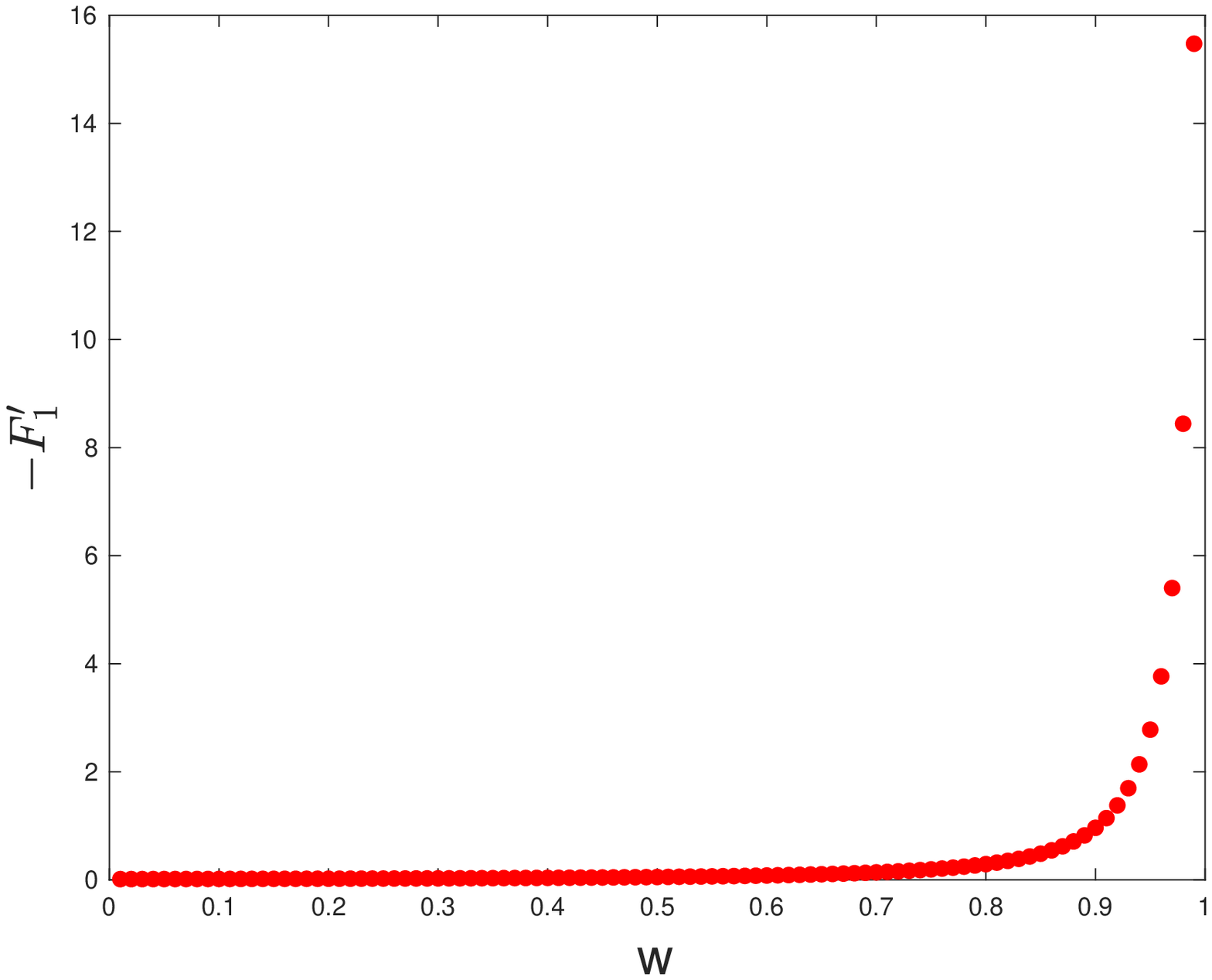} \\[5mm]
    \caption{\sf (Left) Pareto front (Right) Convexity condition}\label{fig:Euclidean_PF}
    \end{figure}
\end{example}

\section{Numerical Experiments}\label{sec:num}

For numerical experiments, we consider variational curves on two two-dimensional manifolds, namely on sphere and torus.  To construct an approximation of the Pareto front of a given problem we implement Algorithm~1 in~\cite{KayMau2014}.

We employ the scalarize--discretize--then--optimize approach that was previously used in~\cite{KayMau2014}.  Under this approach, one first scalarizes the multi-objective problem in the infinite-dimensional space, and then discretizes the scalarized problem directly (in time space) and applies a usually large-scale finite-dimensional optimization method to find a discrete approximate solution of the scalarized problem. By the existing theory of discretization (see for example \cite{AltBaiLemGer2013, Betts2020, DonHag2001, DonHagMal2000, DonHagVel2000, PieScaVel2018}), under some general assumptions, the discrete approximate solution converges to a solution of the continuous-time scalarization~(SP$_w$) of the original problem, yielding a Pareto minimum of the original problem.

In the subproblems of the algorithm in~\cite{KayMau2014}, a direct discretization of Problem~(SP$_w$) employing the trapezoidal rule is solved by using Knitro, version 13.0.1.  Knitro is a popular optimization software; see~\cite{Knitro}. We use AMPL \cite{AMPL} as an optimization modelling language, which employs Knitro as a solver.  We set the Knitro parameters {\tt feastol=1e-8} and {\tt opttol=1e-8}.  We also set the number of discretization points to be 2,000 for the sphere and 5,000 for the torus.

\subsection{Sphere}

Let $\mathbb{S}^2:=\{x=(x_1,x_2,x_3)\in \mathbb{E}^3~\vert~ \Vert x\Vert=1\}$ be the unit-sphere endowed with the standard Euclidean metric/norm. With respect to the induced metric, the covariant derivative $\nabla_t\dot{x}(t)$ on $\mathbb{S}^2$ can be written as
\begin{align}
\nabla_t\dot{x}(t)=\ddot{x}(t)-\langle\ddot{x}(t),x(t)\rangle x(t)=\ddot{x}(t)+\langle\dot{x}(t),\dot{x}(t)\rangle x(t),
\end{align}
where we have used the relation of the twice differentiation of the constraint $\langle x(t),x(t)\rangle=1$. Then the squared norm of the covariant acceleration on $\mathbb{S}^2$ is given by 
\begin{align}
\Vert \nabla_t\dot{x}(t)\Vert^2=\Vert \ddot{x}(t)\Vert^2-\Vert \dot{x}(t)\Vert^4.
\end{align}

Now the general multi-objective optimization problem (MOP) on $\mathbb{S}^2$ can be specified as follows,
\[
\mbox{(S) }\left\{\begin{array}{rl}
\ds\min_{x(\cdot)} & \ \ \ds\left[\int_{t_0}^{t_f} \|\dot{x}(t)\|^2\,dt\,,\ \ \int_{t_0}^{t_f} \left(\|\ddot{x}(t)\|^2 - \|\dot{x}(t)\|^4\right)\,dt\right]
                          \\[4mm] 
\mbox{subject to} & \ \ x_1(t)^2 + x_2(t)^2 + x_3(t)^2 = 1\,, \mbox{\ \ for all } t\in[0,t_f]\,, \\[2mm]
& \ \ x(t_0) = x_0\,\ \ \dot{x}(t_0) = v_0\,,\ \ x(t_f) = x_f\,\ \ \dot{x}(t_f) = v_f\,.
\end{array} \right.
\]
We have
\[
F_1(x) := \int_{t_0}^{t_f} \|\dot{x}(t)\|^2\,dt \qquad\mbox{and}\qquad
F_2(x) := \int_{t_0}^{t_f} \left(\|\ddot{x}(t)\|^2 - \|\dot{x}(t)\|^4\right)\,dt\,.
\]
Then the smooth version of the Chebyshev scalarization of the sphere problem~(S) can simply be stated as
\[
\mbox{(S$_w$) }\left\{\begin{array}{rl}
\ds\min_{x(\cdot),\,\alpha} & \ \ \alpha \\[4mm] 
\mbox{subject to} & \ \ w\,F_1(x) \le\alpha\,, \\[2mm]
& \ \ (1-w)\,F_2(x) \le\alpha\,, \\[2mm]
&\ \ x_1(t)^2 + x_2(t)^2 + x_3(t)^2 = 1\,, \mbox{\ \ for all } t\in[0,t_f]\,, \\[2mm]
& \ \ x(t_0) = x_0\,\ \ \dot{x}(t_0) = v_0\,,\ \ x(t_f) = x_f\,\ \ \dot{x}(t_f) = v_f\,,
\end{array} \right.
\]
where $w\in[0,1)$ is fixed and we have taken $\beta_1^* = \beta_2^* = 0$.


\begin{figure}
\begin{center}
\includegraphics[width=140mm]{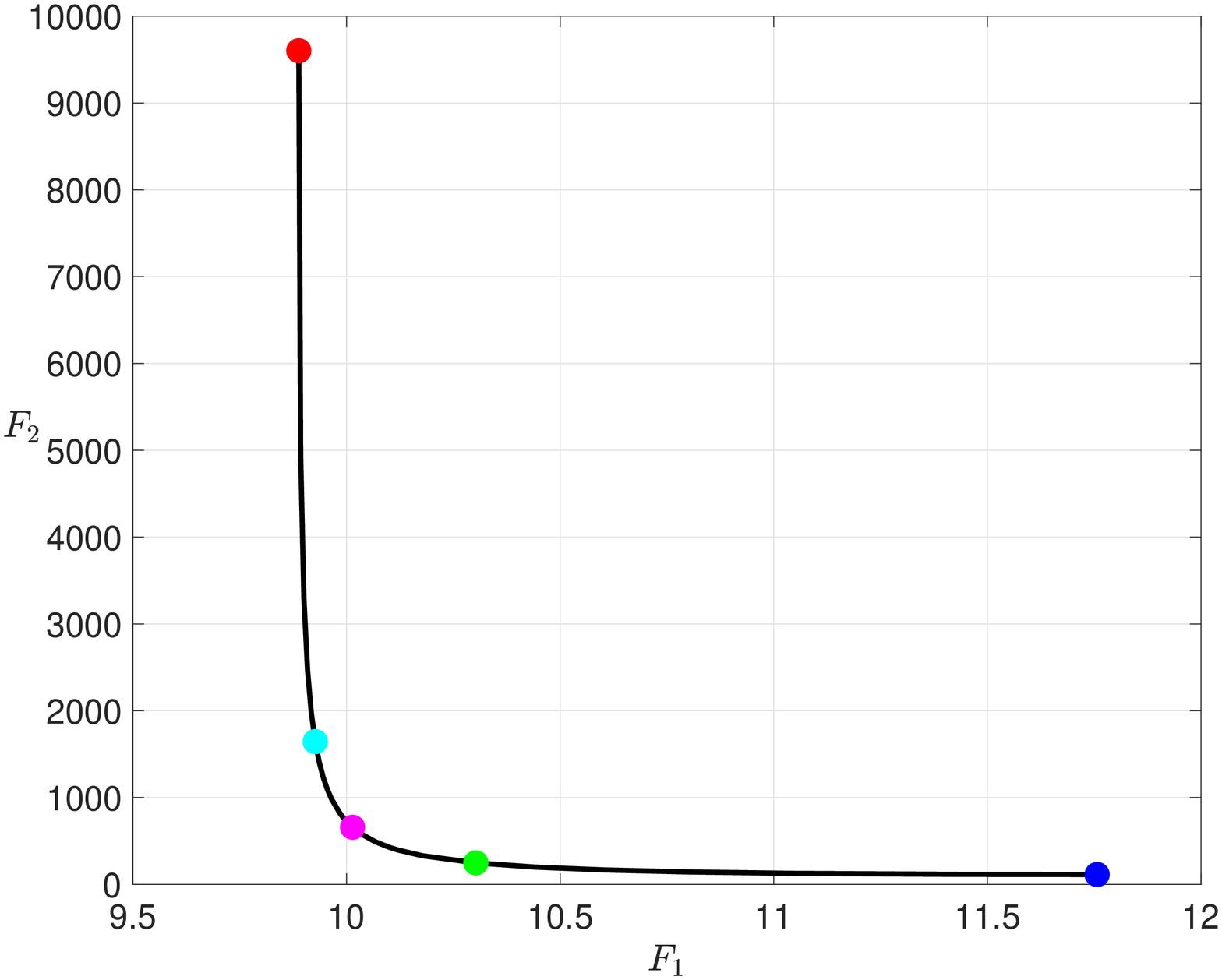} \\[5mm]
\end{center}
\begin{minipage}{90mm}
\hspace*{-15mm}
\includegraphics[width=95mm]{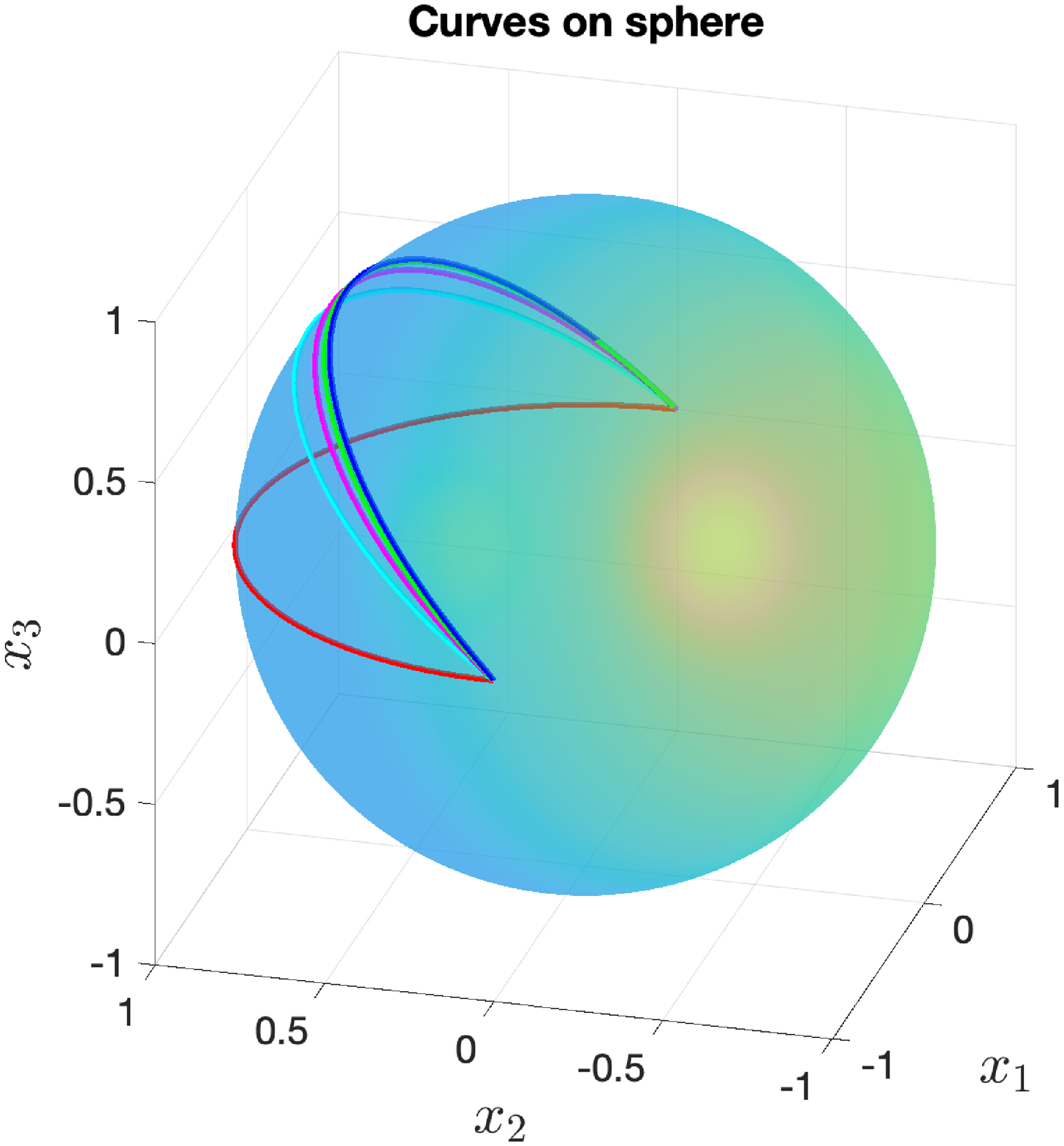} \\[3mm]
\end{minipage}
\begin{minipage}{80mm}
\begin{minipage}{80mm}
\hspace*{-20mm}
\includegraphics[width=90mm]{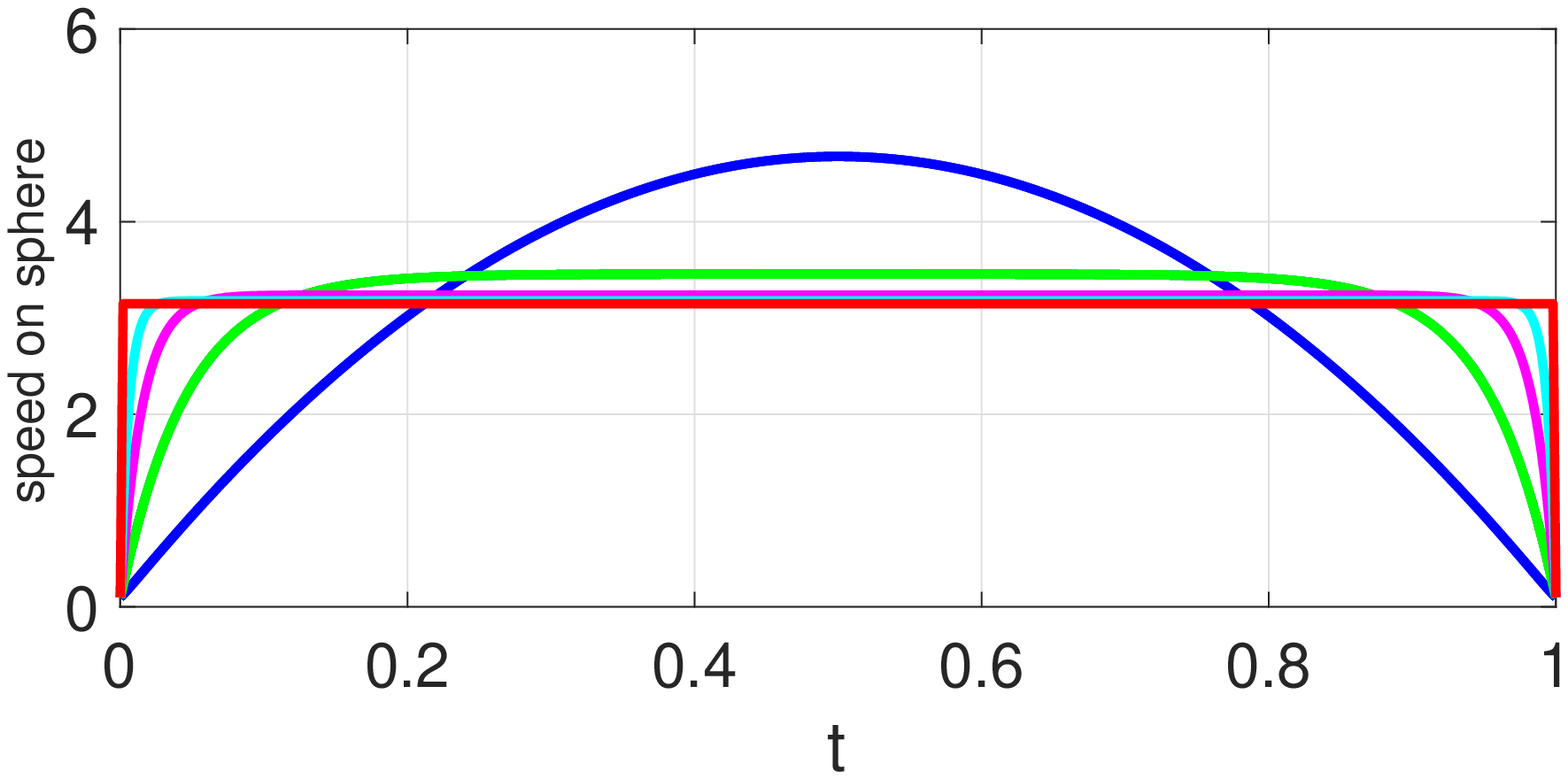} \\[3mm]
\end{minipage}
\\
\begin{minipage}{80mm}
\hspace*{-20mm}
\includegraphics[width=90mm]{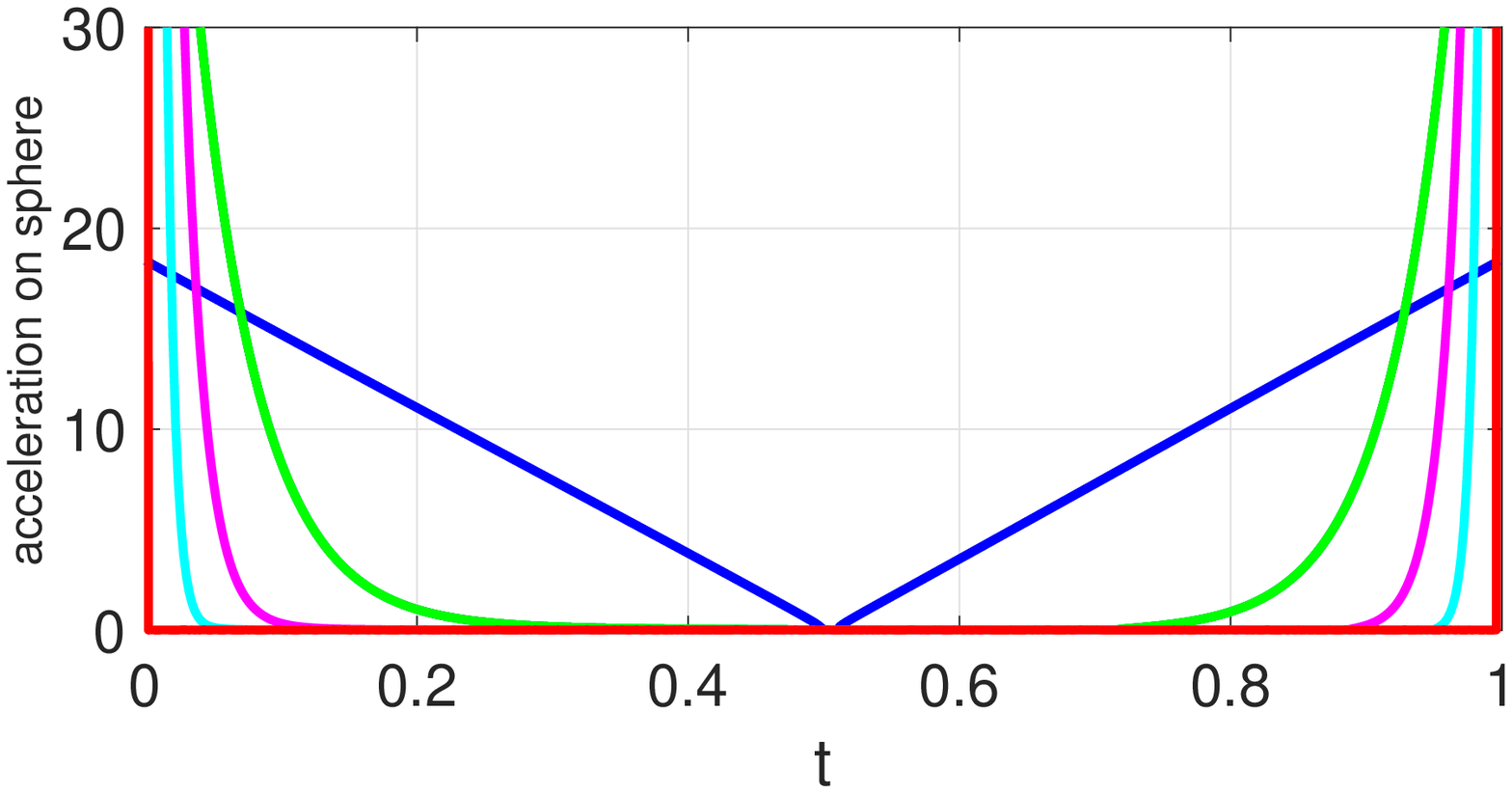} \\[3mm]
\end{minipage}
\end{minipage}
\
\caption{\sf The Pareto front, the speed and acceleration on the sphere for a Riemannian cubic in tension, $x:[0,1]\to \mathbb{S}^2$, from $(x_0,v_0) = ((1,0,0),(0,0.1,0))$ to $(x_f,v_f) = ((-1,0,0),(0,0,-0.1))$.} 
\label{fig:Pareto}
\end{figure}

In Figure~\ref{fig:Pareto}, ``speed on sphere'' is simply\ \ $\|\dot{x}(t)\|$\ \ and the ``acceleration on sphere" is \ \ $\sqrt{\|\ddot{x}(t)\|^2 - \|\dot{x}(t)\|^4}$\,.

In generating the Pareto solutions and an approximation of the Pareto front, we have solved (S$_w$) in the subproblems of Algorithm~1 of ~\cite{KayMau2014},  with $0\le w < 1$.

The colour-coded Pareto solutions in Figure~\ref{fig:Pareto} are given as blue ($0\le w\le0.9059$), green ($w = 0.96$), magenta ($w = 0.985$), cyan ($w = 0.9940$), red ($w \to 1^-$).  The meaningful interval of the $w$ values is $[0.9059,1)$.  The solution denoted by blue is nothing but a (single) Riemannian cubic with $(F_1,F_2) = (11.76, 113.2)$.

Numerical observations suggest that the solution represented by red is asymptotically given by $(F_1,F_2) = (9.88, \infty)$, with $F_1$ correct to three decimal places, as $w \to 1^-$. For practical (illustration) purposes we depict $F_2 < \infty$ in Figure~\ref{fig:Pareto}; otherwise the Pareto curve extends to infinity to the left.

The curves on the sphere appear as if they have different end-velocities, which is not the case except for the red curve.  The end-accelerations of the solution coded in red seem to be impulsive, resulting in a jump in the end-speeds.  This can be observed as instantaneous rotation of the end-velocities of the curves on the sphere.

The next example illustrates that not only the Pareto front can be disconnected but also can contain two distinct Riemannian cubics.

\subsection{Torus}
\label{sec:torus}

Torus $\mathbb{T}^2$ is a geometric object described by 
\begin{eqnarray*}
\big\{x=(x_1,x_2,x_3)\in\dR^3\ &|\ & x_1 = (c+a\cos v)\cos u,\ x_2 = (c+a\cos v)\sin u,\ x_3 = a\sin v,  \\
&& u,v\in[0,2\pi) \big\}\,
\end{eqnarray*}
with the induced standard Euclidean metric, where the constants $a$ and $c$ are respectively the smaller and greater radii of the torus.

By straightforward calculations, we get the derivatives
\begin{align*}
    x_u =& (-(c+a\cos v)\sin u,(c+a \cos v)\cos u,0),\\
    x_v =& (-a\sin v\cos u,-a\sin v\sin u, a\cos v).
\end{align*}
and the first fundamental form
\begin{align*}
    ds^2 &=\langle x_u,x_u\rangle du^2+2\langle x_u,x_v\rangle dudv+\langle x_v,x_v\rangle dv^2\\
    &= (c+a\cos v)^2du^2+a^2dv^2,
\end{align*}
which implies the Riemannian metric $g$ and its inverse $g^{-1}$ as follows,
\begin{align*}
    g=\begin{bmatrix}
    (c+a\cos v)^2 & 0\\
    0 & a^2
    \end{bmatrix},~~
    g^{-1}=\begin{bmatrix}
    (c+a\cos v)^{-2} & 0\\
    0 & a^{-2}
    \end{bmatrix}.
\end{align*}
Further, the non-zero Christoffel symbols are given by
\begin{align*}
    \Gamma_{uu}^v&=-\frac{1}{2}g^{vv}\frac{g_{uu}}{\partial v}=\left(\frac{c}{a}+\cos v\right)\sin v,\\
    \Gamma_{uv}^u&=\Gamma_{vu}^u=\frac{1}{2}g^{uu}\frac{\partial g_{uu}}{\partial v}=-\frac{a\sin v}{c+a\cos v}.
\end{align*}
Therefore, the covariant derivative $\nabla_t\dot{x}(t)=\ddot{x}_i(t)+\Gamma_{jk}^i\dot{x}_j(t)\dot{x}_k(t)$ on the torus $\mathbb{T}^2$ can be denoted by
\[
\nabla_t\dot{x}(t) = (a_1(t),a_2(t))\,,
\]
where
\begin{align*}
a_1 &= \ddot{u}+\Gamma_{uv}^u\dot{u}\dot{v}+\Gamma_{vu}\dot{v}\dot{u}=\ddot{u} - \frac{2a\sin v}{c + a\cos v}\,\dot{u}\dot{v},\\
a_2 &= \ddot{v}+\Gamma_{uu}^v\dot{u}\dot{u}=\ddot{v} + \frac{1}{a} \sin v (c + a\cos v)\,\dot{u}^2.
\end{align*}

The problem of interest is the simultaneous minimization of the objective functionals
\[
F_1 := \int_0^{t_f}\left(\dot{x}_1^2(t) + \dot{x}_3^2(t) + \dot{x}_3^2(t)\right) dt \quad\mbox{and}\quad
F_2 := \int_0^{t_f}\left(a_1^2(t) + a_2^2(t)\right) dt\,.
\]
In other words, we are interested in solving the problem
\[
\mbox{(T) }\left\{\begin{array}{rl}
\ds\min_{u(\cdot),v(\cdot)} & \ \ \ds\left[F_1\,,\ F_2\right]
                          \\[4mm] 
\mbox{subject to} & \ \ (u(0), v(0)) = (u_0,v_0)\,,\ \ (u(t_f), v(t_f)) = (u_f,v_f)\,, \\[1mm]
& \ \ (\dot{u}(0), \dot{v}(0)) = (\dot{u}_0,\dot{v}_0)\,,\ \ (\dot{u}(t_f), \dot{v}(t_f)) = (\dot{u}_f,\dot{v}_f)\,.
\end{array} \right.
\]

Recall that a certain trade-off solution set of Problem~(T) is referred to as the Pareto set.

The Chebyshev scalarization of Problem~(T) is given by
\[
\mbox{(T$_w$) }\left\{\begin{array}{rl}
\ds\min_{u(\cdot),v(\cdot),\alpha} & \ \ \alpha   \\[4mm] 
\mbox{subject to} & \ \ w\,F_1(x) \le\alpha\,, \\[2mm]
& \ \ (1-w)\,F_2(x) \le\alpha\,, \\[2mm]
& \ \ (u(0), v(0)) = (u_0,v_0)\,,\ \ (u(t_f), v(t_f)) = (u_f,v_f)\,, \\[1mm]
& \ \ (\dot{u}(0), \dot{v}(0)) = (\dot{u}_0,\dot{v}_0)\,,\ \ (\dot{u}(t_f), \dot{v}(t_f)) = (\dot{u}_f,\dot{v}_f)\,.
\end{array} \right.
\]

Take an example instance: Let $u_0 = v_0 = 0$, $u_f = \pi$, $ v_f = \pi/2$.  Let $\dot{u}_0 = 1$, $\dot{v}_0 = 0$, $\dot{u}_f = 0$, $\dot{v}_f = -1$, with $t_f = 1$.  See Figure~\ref{fig:Pareto_torus} for the Pareto front constructed by using the scalarization (T$_w$), after weeding out the dominated points found as local minima of (T$_w$).  The essential interval of the weights is found to be $[0.593, 1)$.  The right-most (boundary) point in the front represents the Riemannian cubic found with $w = w_0 = 0.5931$ as $(F_1,F_2) = (170.143, 248.049)$, indicated with the colour green.  As can be seen the Pareto front is disconnected with a large gap.  Moreover, the right-hand segment's epigraph is nonconvex, prohibiting the use of the classical weighted sum as scalarization.

Numerical observations suggest that the left-hand part of the Pareto front is asymptotic to $(F_1,F_2) = (80.2, \infty)$, correct to three significant figures, as $w\to 1^-$.  The related curves are shown in the colour red.  Moreover, the Pareto solution $(F_1,F_2) = (95.15, 457.2)$, found with $w = 0.8259$ and indicated with the colour blue, represents another Riemannian cubic.  To the best knowledge of the authors this is the first nontrivial example of multiple Riemannian cubics with the same boundary data.

The remaining boundary point is indicated with the colour light blue, computed as $(F_1,F_2) = (138.8, 456.3)$ with $w = 0.7667$.

\begin{figure}
\begin{center}
\includegraphics[width=140mm]{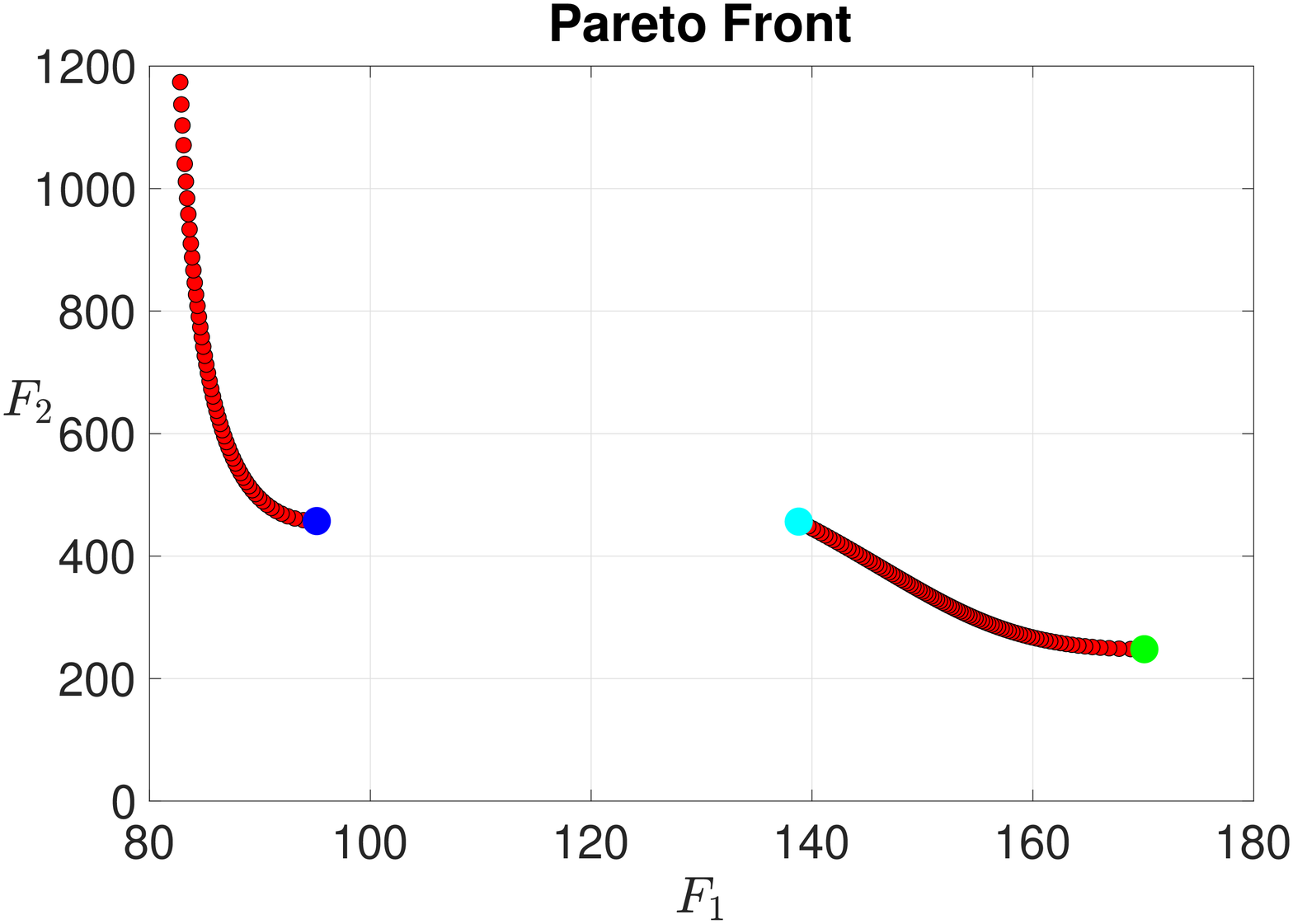}
\end{center}
\begin{minipage}{100mm}
\hspace*{-10mm}
\psfrag{x}{$x_1$}
\includegraphics[width=90mm]{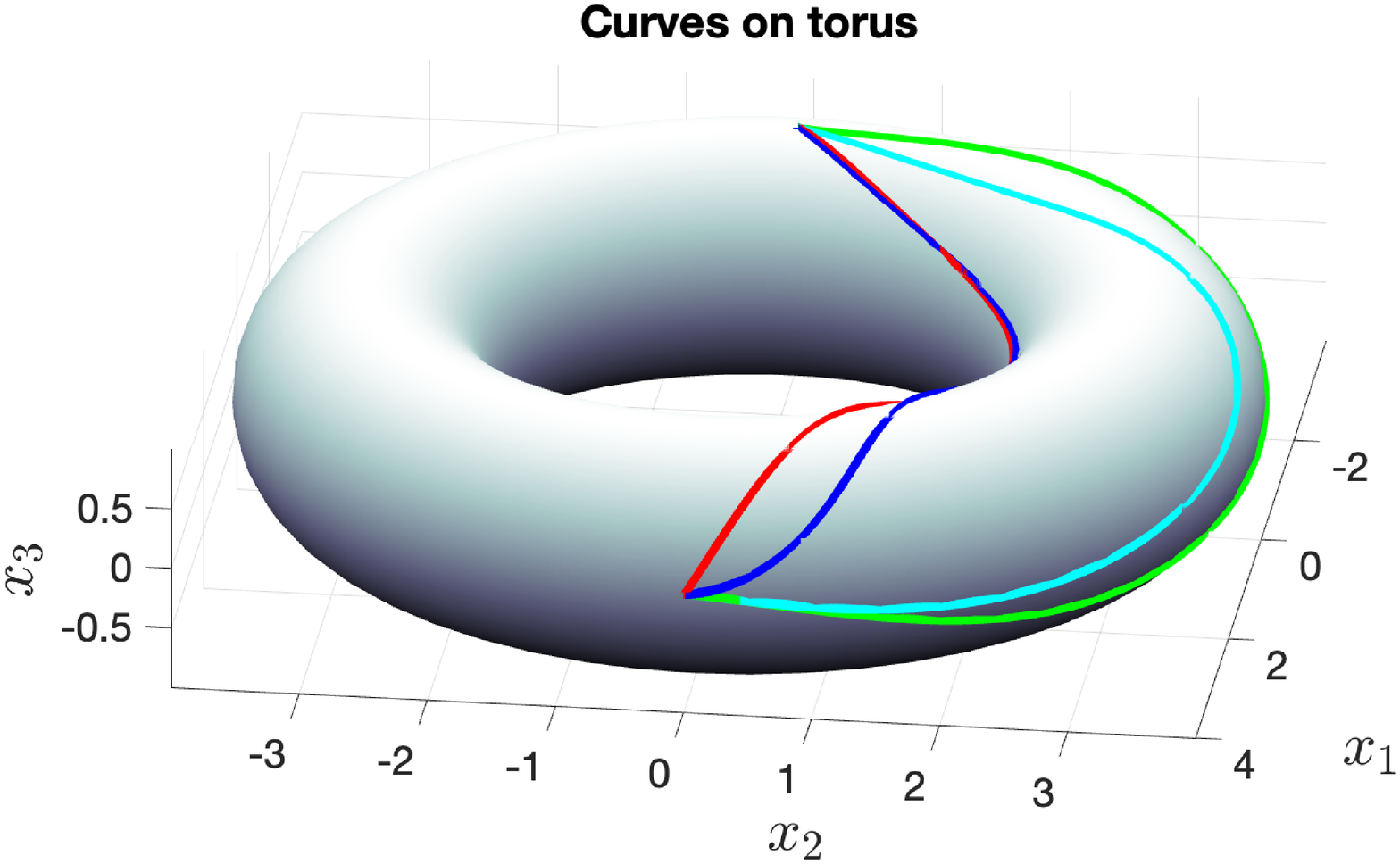}
\end{minipage}
\begin{minipage}{80mm}
\begin{minipage}{80mm}
\hspace*{-20mm}
\includegraphics[width=80mm]{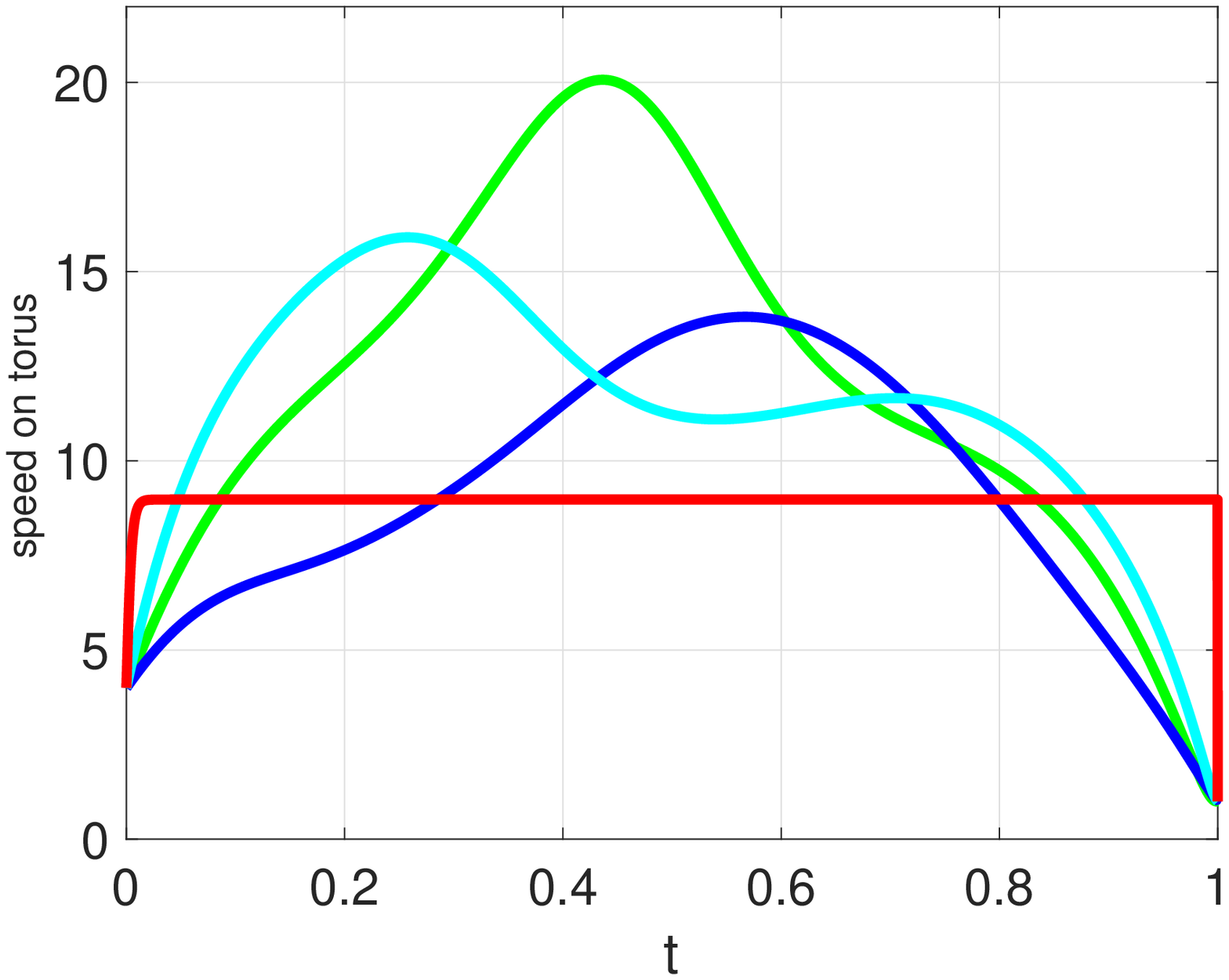}
\end{minipage}
\\
\begin{minipage}{80mm}
\hspace*{-20mm}
\includegraphics[width=80mm]{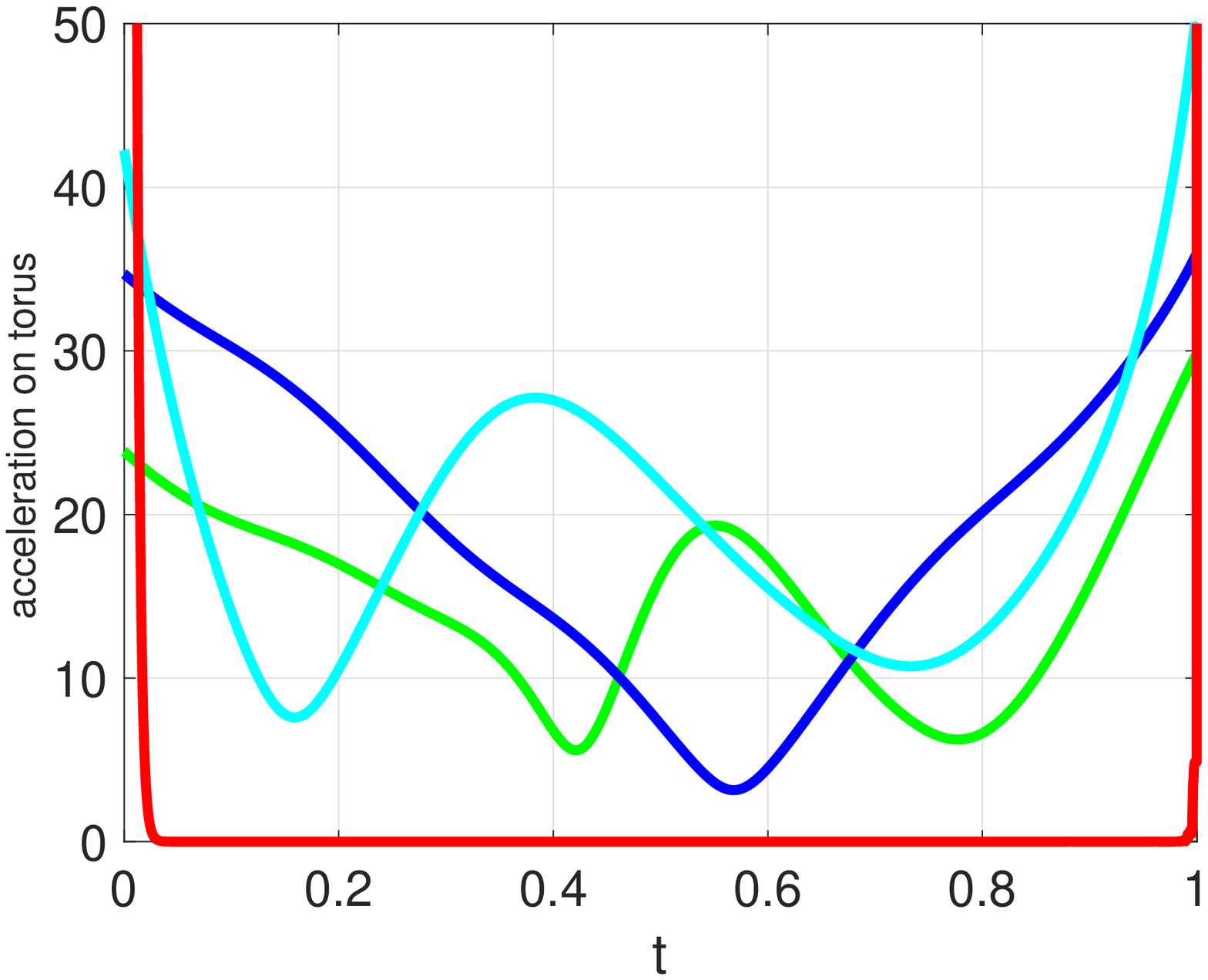}
\end{minipage}
\end{minipage}
\caption{\sf Torus---Pareto front constructed via~(T$_w$).  Curves are obtained between the oriented points $(u_0,v_0,\dot{u}_0,\dot{v}_0) = (0,0,1,0)$ and $(u_f,v_f,\dot{u}_f,\dot{v}_f) = (\pi,\pi/2,0,-1)$.}
\label{fig:Pareto_torus}
\end{figure}

The weighted-sum scalarization of Problem~(T) (or the classical expression for the Riemannian cubics in tension) is given by
\[
\mbox{(Tws) }\left\{\begin{array}{rl}
\ds\min_{u(\cdot),v(\cdot)} & \ \ w\,F_1 + (1-w) F_2  \\[4mm] 
\mbox{subject to} & \ \ (u(0), v(0)) = (u_0,v_0)\,,\ \ (u(t_f), v(t_f)) = (u_f,v_f)\,, \\[1mm]
& \ \ (\dot{u}(0), \dot{v}(0)) = (\dot{u}_0,\dot{v}_0)\,,\ \ (\dot{u}(t_f), \dot{v}(t_f)) = (\dot{u}_f,\dot{v}_f)\,.
\end{array} \right.
\]

See Figure~\ref{fig:Pareto_torus_ws} for the Pareto front constructed by using the scalarization (Tws).  When compared with the Pareto front in Figure~\ref{fig:Pareto_torus} the classical expression for the Riemannian cubics in tension clearly misses an important segment of compromise/Pareto solutions.

We note that the Euler--Lagrange equations as optimality conditions of Problem~(Tws) is provided in Appendix~\ref{app:B}.

\begin{figure}
\begin{center}
\includegraphics[width=140mm]{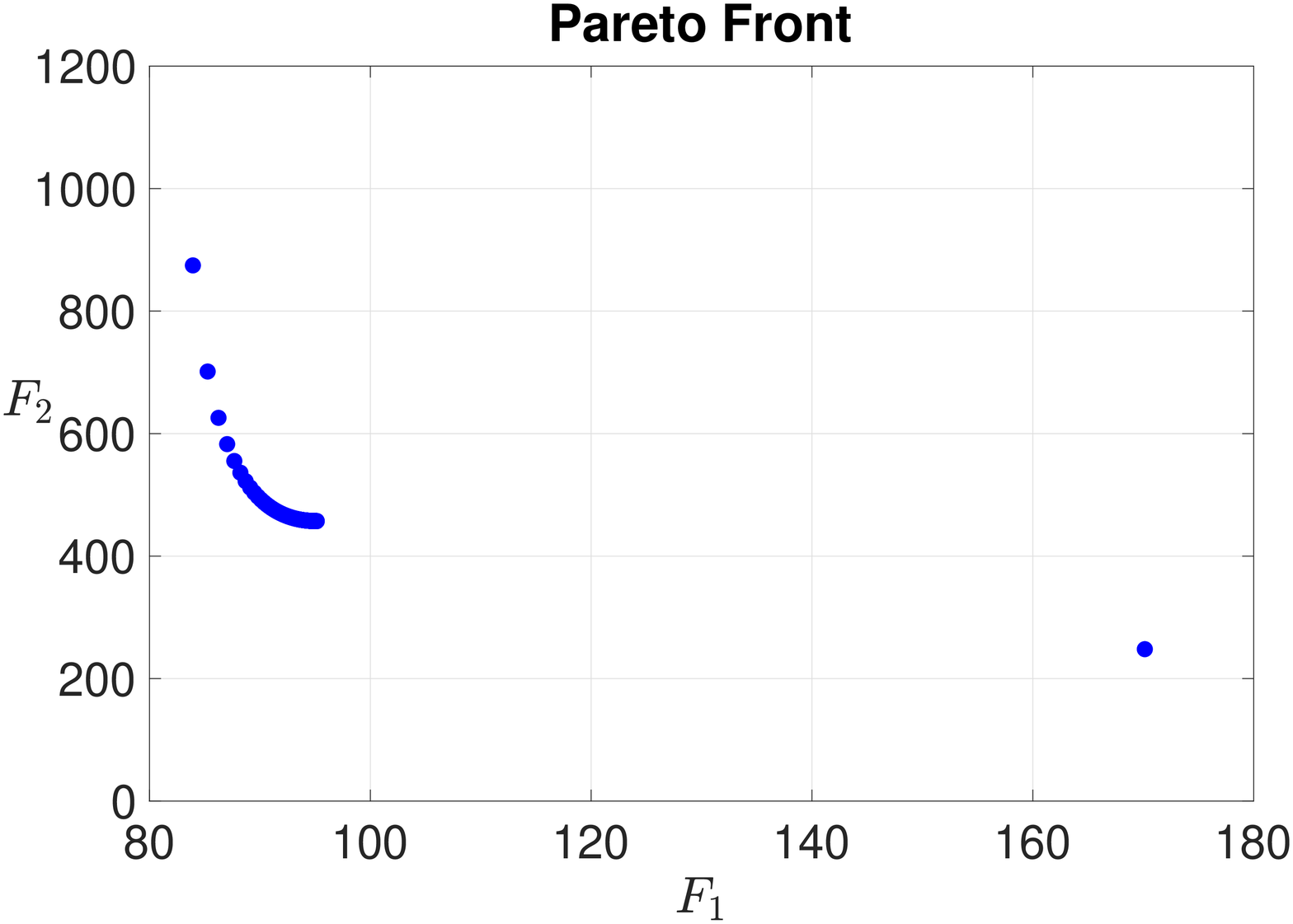} \\[5mm]
\end{center}
\caption{\sf Torus---Pareto front constructed via~(Tws).} 
\label{fig:Pareto_torus_ws}
\end{figure}

\section{Conclusion}\label{sec:con}

Measurements from variational curves, for example, kinetic energy and squared norm of higher-order derivatives, are seldom minimized at the same time in the literature. Motivated by the problem of finding Riemannian cubics in tension, we have formulated a multi-objective optimisation problem, that is, we have posed the minimization of the vector of total kinetic energy and squared norm of acceleration of a curve on a Riemannian manifold. To obtain a sequence of single-objective optimization problems, we adopted the Chebychev and weighted-sum scalarizations in this paper. Some numerical experiments presented on a sphere and a torus illustrated disconnected Pareto fronts and non-uniqueness of Riemannian cubics. To the authors' best knowledge, these are non-trivial observations that may bring the optimization and differential geometry communities' attention. Furthermore, we derived some convexity conditions for the Pareto fronts on general Riemannian manifolds. 

Future research may include more theoretical studies on the phenomenon of non-unique Riemannian cubics on a torus and the disconnected Pareto fronts that they belong to.  We have investigated the local optimality of the two Riemannian cubics by utilizing the test looking for bounded solutions of certain Riccati equations presented in~\cite{MauPic1995}.  However, it was not possible for us to obtain bounded Riccati solutions deeming the test inconclusive.  Further investigation in search for bounded Riccati solutions might also be a direction interesting to pursue.

\ \\
\noindent
{\bf\Large Data Availability} \\[2mm]
The full resolution Matlab graph/plot files that support the findings of this study are available from the corresponding author upon request.

\ \\
\noindent
{\bf\Large Conflict of Interest} \\[2mm]
The authors have no competing, or conflict of, interests to declare that are relevant to the content of this article.

\appendix
\renewcommand{\appendixname}{Appendix~\Alph{section}}
\section{Discussion on Remark \ref{cor_convex_eculidean}}\label{cor_convex_pf}

Cubics in tension are solutions of the following Euler--Lagrange equation
\begin{align*}
    \begin{cases}
        x^{(4)}(t)-\tau \ddot{x}(t)=0,\\
        x(0)=x_0,\dot{x}(0)=v_0,x(T)=x_T,\dot{x}(T)=v_T,
    \end{cases}
\end{align*}
which can be analytically written as
\begin{align*}
    x=x_0+v_0 t+(\sinh(\sqrt{\tau}t)-\sqrt{\tau}t)c_1+(\cosh(\sqrt{\tau}t)-1)c_2,
\end{align*}
where $c_1$ and $c_2$ satisfy
\begin{align*}
    \begin{pmatrix}
        c_1\\
        c_2
    \end{pmatrix}=\frac{1}{\Delta}\begin{pmatrix}
    \sqrt{\tau}T\sinh(\sqrt{\tau}T) & 1-\cosh(\sqrt{\tau}T)\\
    \sqrt{\tau}T(1-\cosh(\sqrt{\tau}T)) & \sinh(\sqrt{\tau}T)-\sqrt{\tau}T
    \end{pmatrix}\begin{pmatrix}
     x_T-x_0-v_0T\\
     (v_T-v_0)T
    \end{pmatrix}
\end{align*}
with $\Delta=\sqrt{\tau}T(2\cosh(\sqrt{\tau}T)-2-\sqrt{\tau}T\sinh(\sqrt{\tau}T))$. 

To simplify notations, we denote by $c=(c_1,c_2)^T$, $A=\begin{pmatrix}\sinh(\tau_T)-\tau_T & \cosh(\tau_T)-1\\\tau_T(\cosh(\tau_T)-1) & \tau_T\sinh(\tau_T)\end{pmatrix}$, $b=(x_T-x_0-v_0T,(v_T-v_0)T)^T$, $\tau_T=\sqrt{\tau}T$, $\tau_t=\sqrt{\tau}t$. Then, $Ac=b$, which implies $c^\prime=-A^{-1}A^\prime c$.

Straightforward calculations yield
\begin{align*}
    \ddot{x}&=\tau(\sinh(\tau_t),\cosh(\tau_t))c,\\
    x^\prime &=\frac{\tau^\prime t}{2\sqrt{\tau}}(\cosh(\tau_t)-1,\sinh(\tau_t))c-(\sinh(\tau_t)-\tau_t,\cosh(\tau_t)-1)A^{-1}A^\prime c.
\end{align*}
Note that
\begin{align*}
    &\int_0^T \tau_t\begin{pmatrix}
        \sinh(\tau_t)\\
        \cosh(\tau_t)
    \end{pmatrix}(\cosh(\tau_t)-1,\sinh(\tau_t)) dt\\
    &=\frac{1}{\sqrt{\tau}}\begin{pmatrix}
        \frac{\tau_T}{4}\cosh(2\tau_T)-\frac{\sinh(2\tau_T)}{8}-\tau_T\cosh(\tau_T)+\sinh(\tau_T) & \frac{\tau_T}{4}\sinh(2\tau_T)-\frac{\sinh^2(\tau_T)}{4}-\frac{\tau_T^2}{4}\\
        -\frac{7}{8}+\frac{\tau_T^2}{4}+\frac{\tau_T}{4}\sinh(2\tau_T)-\tau_T\sinh(\tau_T)-\frac{\cosh(2\tau_T)}{8}+\cosh(\tau_T) & \frac{\tau_T}{4}\cosh(2\tau_T)-\frac{\sinh(2\tau_T)}{8}
    \end{pmatrix}\\
    &=:\frac{B_1}{\sqrt{\tau}},\\
    &\int_0^T \begin{pmatrix}
        \sinh(\tau_t)\\
        \cosh(\tau_t)
    \end{pmatrix}(\sinh(\tau_t)-\tau_t,\cosh(\tau_t)-1) dt\\
    &=\frac{1}{\sqrt{\tau}}\begin{pmatrix}
        \frac{\sinh(2\tau_T)}{4}-\frac{\tau_T}{2}-\tau_T\cosh(\tau_T)+\sinh(\tau_T) & \frac{(\cosh(\tau_T)-1)^2}{2}\\
        \frac{\sinh^2(\tau_T)}{2}-\tau_T\sinh(\tau_T)+\cosh(\tau_T)-1 & \frac{\tau_T}{2}+\frac{\sinh(2\tau_T)}{4}-\sinh(\tau_T)
    \end{pmatrix}\\
    &=:\frac{B_2}{\sqrt{\tau}}.
\end{align*}
Then, the integration $\int_0^T\langle \ddot{x},x^\prime\rangle dt$ becomes
\begin{align*}
    \frac{\tau^\prime}{2\sqrt{\tau}}c^T\left(B_1-\frac{2\tau}{\tau^\prime}B_2A^{-1}A^\prime\right)c,
\end{align*}
where $\tau^\prime=\frac{1}{(1-w)^2}> 0$ for $w\neq 1$ and
\begin{align*}
    B_1-\frac{2\tau}{\tau^\prime}B_2A^{-1}A^\prime=:B_3=:\begin{pmatrix}
        b_{11} & b_{12}\\
        b_{21} & b_{22}
    \end{pmatrix},
\end{align*}
\begin{align*}
    b_{11}=&-\tau_T \cosh(\tau_T)+\frac{\tau_T}{4}\cosh(2\tau_T)+\sinh(\tau_T)-\frac{1}{8}\sinh(2\tau_T)+\\
    &\frac{\tau_T}{2\tilde{\Delta}}(\cosh(\tau_T)-1)\sinh(\tau_T)(\tau_T-2\sinh(\tau_T)-\cosh(\tau_T)(\sinh(\tau_T)-2\tau_T)),\\
    b_{12}=&-\frac{\tau_T^2}{4}-\frac{\sinh^2(\tau_T)}{4}+\frac{\tau_T\sinh(2\tau_T)}{4}+\frac{1}{2\tilde{\Delta}}(\cosh(\tau_T)-1)^3\sinh(\tau_T),\\
    b_{21}=&-\frac{7}{8}+\frac{\tau_T^2}{4}+\cosh(\tau_T)-\frac{\cosh(2\tau_T)}{8}-\tau_T\sinh(\tau_T)+\frac{\tau_T}{4}\sinh(2\tau_T)-\\
    &\frac{\tau_T}{2\tilde{\Delta}}(\cosh(\tau_T)-1+\tau_T\sinh(\tau_T))(2-2\cosh(\tau_T)+2\tau_T\sinh(\tau_T)-\sinh^2(\tau_T)),\\
    b_{22}=&\frac{\tau_T}{4}\cosh(2\tau_T)-\frac{\sinh(2\tau_T)}{8}-\\
    &\frac{1}{4\tilde{\Delta}}(\sinh(\tau_T)-\tau_T)(\sinh(\tau_T)+\tau_T\cosh(\tau_T))(2\tau_T-4\sinh(\tau_T)+\sinh(2\tau_T)),\\
    \tilde{\Delta}=&\Delta/\tau_T.
\end{align*}
It is a bit messy to write down the closed forms of the eigenvalues of $B_3$, instead, we plot the eigenvalues for straightforward visualization. In Figure \ref{fig:Euclidean_eigen}, we choose $T=1$, from which we can see that eigenvalues of $B_3$ are positive at least on $(0.2,1)$. Therefore, the convexity condition \ref{convex_ws} is verified at least on $(0.2,1)$.

    \begin{figure}[ht]
    \centering
    \includegraphics[width=75mm]{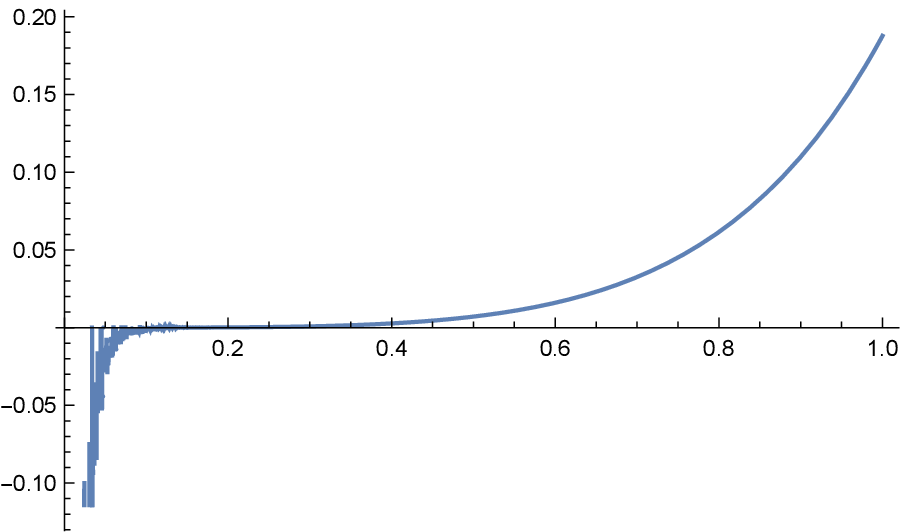}
    \includegraphics[width=75mm]{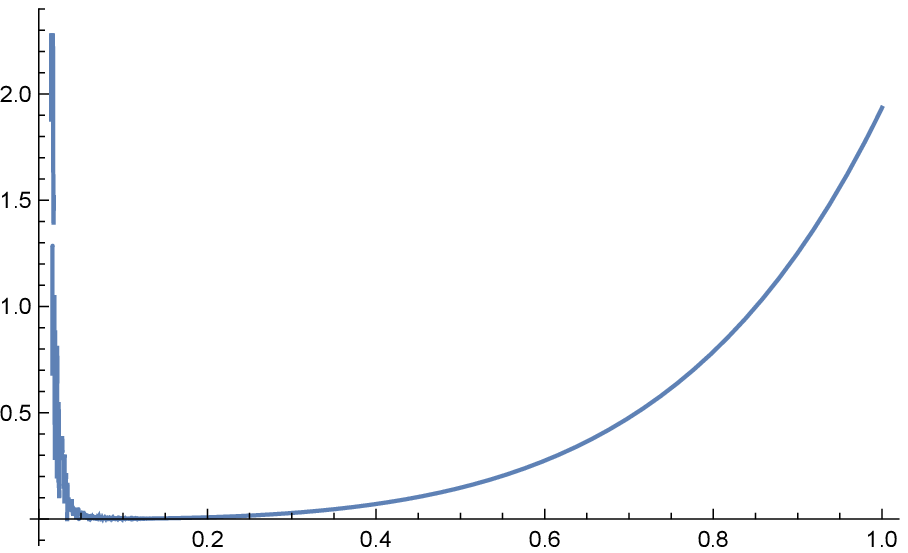} \\[5mm]
    \caption{\sf Eigenvalues of $B_3$ (Left: smaller, Right: larger)}\label{fig:Euclidean_eigen}
    \end{figure}

\section{Riemannian cubics in tension on torus}
\label{app:B}

In this section, we consider Riemannian cubics in tension on torus using variational methods. To short notations, we denote by $p:=\frac{2a\sin v}{c+a\cos v}$, $q:=\sin v\left(\frac{c}{a}+\cos v\right)$, $r:=c+a\cos v$ and their derivatives with respect to $v$ by upper notation prime. Then, we have
\begin{align*}
    F=&\int_0^T\Vert\nabla_t\dot{x}\Vert^2+\lambda \Vert\dot{x}\Vert^2 dt\\
    =&\int_0^T(\ddot{u}-p\dot{u}\dot{v})^2+(\ddot{v}+q\dot{u}^2)^2+\lambda(r^2\dot{u}^2+a^2\dot{v}^2) dt.
\end{align*}

Considering the variation of $F$, we find
\begin{align*}
    \delta F=&2\int_0^T(\ddot{u}-p\dot{u}\dot{v})(\delta\ddot{u}-p^\prime\delta v\dot{u}\dot{v}-p\delta\dot{u} \dot{v}-p\dot{u}\delta\dot{v})+(\ddot{v}+q\dot{u}^2)(\delta\ddot{v}+q^\prime\delta v\dot{u}^2+2q\dot{u}\delta\dot{u})+\\
    &\lambda(rr^\prime\delta v\dot{u}^2+r^2\dot{u}\delta\dot{u}+a^2\dot{v}\delta\dot{v})dt\\
    =&2\int_0^T\left(\frac{d^2}{dt^2}\left(\ddot{u}-p\dot{u}\dot{v}\right)+\frac{d}{dt}\left(\left(\ddot{u}-p\dot{u}\dot{v}\right)p\dot{v}\right)-\frac{d}{dt}\left(\left(\ddot{v}+q\dot{u}^2\right)2q\dot{u}\right)-\frac{d}{dt}\left(\lambda r^2\dot{u}\right)\right)\delta u+\\
    &\left(\frac{d^2}{dt^2}\left(\ddot{v}+q\dot{u}^2\right)+p\frac{d}{dt}\left((\ddot{u}-p\dot{u}\dot{v})\dot{u}\right)+(\ddot{v}+q\dot{u}^2)q^\prime \dot{u}^2+\lambda rr^\prime\dot{u}^2-\frac{d}{dt}\left(\lambda a^2\dot{v}\right)\right)\delta v dt,
\end{align*}
which implies the following Euler--Lagrange equations,
\begin{align*}
    u^{(4)}&-(p^2+p^\prime)\ddot{u}\dot{v}^2-(p+2q)\ddot{u}\ddot{v}-(p^{\prime\prime}+2pp^\prime)\dot{u}\dot{v}^3-(2p^2+3p^\prime+2q^\prime)\dot{u}\dot{v}\ddot{v}\\
    &-(p+2q)\dot{u}\dddot{v}-4qq^\prime \dot{u}^3\dot{v}-6q^2\dot{u}^2\ddot{u}-2\lambda rr^\prime\dot{u}\dot{v}-\lambda r^2\ddot{u}=0,\\
    v^{(4)}&+(q^{\prime\prime}-pp^\prime)\dot{u}^2\dot{v}^2+(2q^\prime-p^2)(2\dot{u}\ddot{u}\dot{v}-\dot{u}^2\ddot{v})+(p+2q)(\ddot{u}^2+\dot{u}\dddot{u})\\
    &+qq^\prime\dot{u}^4+\lambda rr^\prime\dot{u}^2-\lambda a^2\ddot{v}=0.
\end{align*}

Introducing the variables $u_1:=u$, $u_2:=\dot{u}$, $u_3:=\ddot{u}$, $u_4:=\dddot{u}$, $v_1:=v$, $v_2:=\dot{v}$, $v_3:=\ddot{v}$, $v_4:=\dddot{v}$, we have the following first-order ODE,
\begin{align}
\begin{cases}
\dot{u}_1=&u_2,\\
\dot{u}_2=&u_3,\\
\dot{u}_3=&u_4,\\
\dot{u}_4=&(p^2+p^\prime)u_3v_2^2+(p+2q)u_3v_3+(p^{\prime\prime}+2pp^\prime)u_2v_2^3+(2p^2+3p^\prime+2q^\prime)u_2v_2v_3\\
    &+(p+2q)u_2v_4+4qq^\prime u_2^3v_2+6q^2u_2^2u_3-2\lambda a^2qu_2v_2+\lambda r^2u_3,\\
\dot{v}_1=&v_2,\\
\dot{v}_2=&v_3,\\
\dot{v}_3=&v_4,\\
\dot{v}_4=&(pp^\prime-q^{\prime\prime})u_2^2v_2^2+(p^2-2q^\prime)(2u_2u_3v_2-u_2^2v_3)-(p+2q)(u_3^2+u_2u_4)\\
    &-qq^\prime u_2^4-\lambda rr^\prime u_2^2+\lambda a^2v_3.
\end{cases}
\end{align}
Therefore, in order to solve the problem (T$_{ws}$), we only have to solve the above ODE system.

\end{document}